\DeclareMathOperator{\diam}{diam}
\DeclareMathOperator{\dive}{div}
\def\ds{\displaystyle}
\def\eps{{\varepsilon}}
\def\O{\Omega}
\def\N{\mathbb{N}}
\def\R{\mathbb{R}}
\def\Z{\mathbb{Z}}
\def\A{\mathcal{A}}
\def\F{\mathcal{F}}
\def\la{\lambda}
\def\eps{\varepsilon}
\def\pa{\partial}
\newcommand{\ccap}{\mathrm{cap}}
\newcommand{\be}{\begin{equation}}
\newcommand{\ee}{\end{equation}}
\newcommand{\bib}[4]{\bibitem{#1}{\sc#2: }{\it#3. }{#4.}}
\numberwithin{equation}{section}
\theoremstyle{plain}
\newtheorem{theo}{Theorem}[section]
\newtheorem{lemm}[theo]{Lemma}
\newtheorem{prop}[theo]{Proposition}
\newtheorem{conj}[theo]{Conjecture}
\theoremstyle{definition}
\newtheorem{rem}[theo]{Remark}
\title[On a class of Cheeger inequalities]{On a class of Cheeger inequalities}
\author[L. Briani]{Luca Briani}
\author[G. Buttazzo]{Giuseppe Buttazzo}
\author[F. Prinari]{Francesca Prinari}
\date{}
\begin{document}

\begin{abstract}
We study a general version of the Cheeger inequality by considering the shape functional $\F_{p,q}(\O)=\lambda_p^{1/p}(\O)/\lambda_q^{1/q}(\O)$. The infimum and the supremum of $\F_{p,q}$ are studied in the class of {\it all} domains $\O$ of $\R^d$ and in the subclass of {\it convex} domains. In the latter case the issue concerning the existence of an optimal domain for $\F_{p,q}$ is discussed.
\end{abstract}

\maketitle

\textbf{Keywords:} Cheeger constant, principal eigenvalue, shape optimization, $p$-Laplacian.

\textbf{2010 Mathematics Subject Classification:} 49Q10, 49J45, 49R05, 35P15, 35J25.

\section{Introduction}

The starting point of this research is the celebrated {\it Cheeger inequality}:
\be\label{cheeger}
\frac{\lambda(\O)}{h^2(\O)}\ge\frac14,
\ee
here $\lambda(\O)$ denotes the first eigenvalue of the Laplace operator $-\Delta$ on the open set $\O$, with Dirichlet boundary conditions, and $h(\O)$ denotes the {\it Cheeger constant}
\be\label{cheegerdef}
h(\O)=\inf\bigg\{\frac{P(E)}{|E|}\ :\ E\Subset\O\bigg\},
\ee
where the symbol $E\Subset\O$ indicates that the closure of $E$ is contained in $\O$. Here $P(E)$ denotes the {\it perimeter} of $E$ in the De Giorgi sense, and $|E|$ the Lebesgue measure of $E$. Equivalently $h(\O)$ can be defined through the expression
$$
h(\O)=\inf\bigg\{\frac{\int_\O|\nabla u|\,dx}{\int_\O|u|\,dx}\ :\ u\in C^1_c(\O)\bigg\}.
$$
With some additional regularity assumption on $\O$, in \eqref{cheegerdef} the infimum can be equivalently evaluated on the whole class of subsets $E\subset\O$. For instance, it is enough to require that $\O$ coincides with its essential interior; we refer the reader to \cite{leo15} and \cite{Pa} for a survey on the Cheeger constant.
We recall that if $\O$ is a ball of radius $r$ in $\R^d$ we have $h(\O)=d/r$.

In this paper we consider, for every $1<p<+\infty$, the $p$-Laplace operator
$$-\Delta_p u=-\dive\big(|\nabla u|^{p-2}\nabla u\big)$$
and the corresponding principal eigenvalue
\be\label{def.la}
\lambda_p(\O)=\inf\bigg\{\frac{\int_\O|\nabla u|^p\,dx}{\int_\O|u|^p\,dx}\ :\ u\in C^1_c(\O)\bigg\}.
\ee
The following properties are well-known:
\begin{itemize}
\item any minimizer of \eqref{def.la} solves, in the weak sense, the Dirichlet problem:
$$
\begin{cases}
-\Delta_p u=\lambda |u|^{p-2}u&\hbox{in }\O,\\
u\in W^{1,p}_0(\O);
\end{cases}
$$
\item$\la_p(\cdot)$ is {\it decreasing} with respect to the set inclusion, that is
\be\label{decla}
\la_p(\O)\le \la_p(\O'),\qquad\hbox{if }\O'\subset\O;
\ee
\item the {\it scaling property}
\be\label{scala}
\la_p(t\O)=t^{-p}\la_p(\O),\qquad\hbox{for all }t>0;
\ee
\item the {\it asymptotics}

\be\label{asyla}
\lim_{p\to+\infty}\la^{1/p}_p(\O)=\rho^{-1}(\O), \qquad \lim_{p\to 1^+}\la_p(\O)=h(\O),
\ee
where $\rho(\O)$ denotes the so-called {\it inradius} of $\O$, corresponding to the maximal radius of a ball contained in $\O$ (see \cite{JLM} and \cite{KaFr03}). Equivalently, $\rho(\O)$ can be defined as
$$\rho(\O):=\|d_\O\|_{L^\infty(\O)},$$
where $d_\O$ is the distance function from $\partial\O$
$$d_\O(x):=\inf\big\{|x-y|\ :\ y\in \partial\O \big\}.$$
\end{itemize}
Taking into account \eqref{asyla} we define 
\be\label{laextrem}
\la^{1/p}_p(\O)=\begin{cases}
h(\O) &\hbox{if }p=1;\\
\rho(\O)^{-1}&\hbox{if }p=+\infty.\\
\end{cases}
\ee
Inequality \eqref{cheeger} can be then seen as a particular case of the more general inequality
\be\label{cheegerp}
\frac{\lambda_p^{1/p}(\O)}{\lambda_q^{1/q}(\O)}\ge\frac{q}{p}\qquad\hbox{for every }1\le q\le p\le+\infty
\ee
that can be also rephrased as a monotonicity property:
$$\text{the map $p\mapsto p\lambda_p^{1/p}(\O)$ is monotonically increasing.}$$
Although this result is already known for $1<q\le p<+\infty$ (see \cite{Lind93}), for the sake of completeness we recall its proof in Proposition \ref{lindqvist}.

Our goal is to study from the shape optimization point of view the functionals
$$\F_{p,q}(\O)=\frac{\lambda_p^{1/p}(\O)}{\lambda_q^{1/q}(\O)}.$$
From the properties listed above $\F_{p,q}$ is \textit{scaling free}, that is
$$\F_{p,q}(t\O)=\F_{p,q}(\O)\qquad\text{for all }t>0.$$
We consider the minimization/maximization problem of $\F_{p,q}$ in the classes 
\[\begin{split}
&\A^d_{all}=\{\O\subset\R^d\ : \ \O\hbox{ open, }0<|\O|<+\infty\},\\
&\A^d_{convex}=\{\O\in\A^d_{all}\ :\ \O\hbox{ convex}\}.
\end{split}\]
For the sake of brevity we denote by $m_d(p,q),M_d(p,q)$ the quantities
$$m_d(p,q)=\inf_{\O\in\A^d_{all}}\F_{p,q}(\O),\qquad M_d(p,q)=\sup_{\O\in\A^d_{all}}\F_{p,q}(\O).$$
Similarly, for the convex case, we use the notation
$$\overline{m}_d(p,q)=\inf_{\O\in\A^d_{convex}}\F_{p,q}(\O),\qquad\overline{M}_d(p,q)=\sup_{\O\in\A^d_{convex}}\F_{p,q}(\O).$$

The study of the functionals $\F_{p,q}$ has been proposed in \cite{Pa15}, where the author focused on the case $p=2$, $q=1$. Recently some developments have been made in \cite{Fto}, again in the case $p=2$, $q=1$. 

The paper is organized as follows. In Section \ref{sall} we discuss the optimization problem in the class $\A^d_{all}$. In particular we prove that \eqref{cheegerp} becomes sharp when $d\to+\infty$ (Theorem \ref{bigdim}), and we characterize the behavior of $M_d(p,q)$ in varying $p,q$, showing that it remains finite if and only if $q>d$, (Theorem \ref{theo.supall}). The optimization problems in the class $\A^d_{convex}$ are discussed in Section \ref{sconvex}. After recalling some known estimates we prove that $\overline{M}_d(p,q)$ is always finite (Proposition \ref{convexbound}) and that, in some cases, the minimization problems for $\F_{p,q}$ among planar convex open sets, admits a solution, (Theorem \ref{main}). In Section \ref{sfurther}, we collect some open problems that in our opinion can be interesting for future researches. At last, we conclude the paper with a small appendix, where we give self contained proofs of some known facts in shape optimization, which are useful for our purpose.


\section{Optimization in $\A^d_{all}$}\label{sall}

As it often happens in shape optimization, the one-dimensional case is simpler. Indeed in this case the functional $\F_{p,q}$ turns out to be constant. Hereinafter we denote by $\pi_p$ the Poincar\'e-Sobolev constant:
\be\label{pi} \pi_p=\inf\left\{\frac{\|\phi'\|_{L^p(0,1)}}{\|\phi\|_{L^p(0,1)}}\ :\ \phi\in C^1_c(0,1),\ \phi(1)=\phi(0)=0\right\}=\la^{1/p}_p(0,1).
\ee
Explicit computations, see for instance \cite{Kaji}, show that
$$\pi_p=2\pi\frac{(p-1)^{1/p}}{p\sin(\pi/p)}.$$
In particular one has $\pi_2=\pi$, $\pi_1=\pi_\infty=2$, and $\pi_p=\pi_{p'}$ for every $p$, where $p'$ is the conjugate exponent of $p$.

\begin{prop}
Let $1\le q\le p\le+\infty$. Then, for every $\O\in\A^1_{all}$ we have
$$F_{p,q}(\O)=\frac{\pi_p}{\pi_q}.$$
\end{prop}

\begin{proof}
It is enough to notice that if $\O\in\A^1_{all}$ is the disjoint union of a family of open intervals $(\O_i)_{i\in I}$, then, for every $1\le p\le +\infty$, we have
\be\label{ladisj}
\lambda_p^{1/p}(\O)=\inf_{i\in I}\lambda_p^{1/p}(\O_i),
\ee
 Indeed, when $p=+\infty$ \eqref{ladisj} is straightforward by \eqref{laextrem}, while, when $1\le p<+\infty$, we notice that for every $u\in C^\infty_c(\O)$ it holds
$$\int_\O|\nabla u|^pdx\ge\sum_{i\in I}\int_{\O_i}|\nabla u|^pdx
\ge\sum_{i\in I}\la_p(\O_i)\int_{\O_i}|u|^pdx\ge\inf_{i\in I}\la_p(\O_i)\sum_{i\in I}\int_{\O_i}|u|^pdx,$$
which implies
$$\lambda_p(\O)\ge\inf_{i\in I}\lambda_p(\O_i).$$
By \eqref{decla}, the latter inequality easily leads to \eqref{ladisj}.
Taking into account that, by \eqref{scala} and \eqref{pi}, we have
$$\inf_{i\in I}\lambda_p^{1/p}(\O_i)=\inf_{i\in I}|\O_i|^{-1}\pi_p,$$
we achieve the thesis.
\end{proof}

From now on we always assume $d\ge2$. Next proposition provides a lower bound to $m_d(p,q)$ and generalizes inequality \eqref{cheeger}.

\begin{prop}\label{lindqvist}
Let $\O\in\A^d_{all}$. Then, the function $p\mapsto p\la_p^{1/p}(\O)$ is nondecreasing in $[1,+\infty]$. In particular we have
\be\label{infall}
m_d(p,q)\ge q/p.
\ee
\end{prop}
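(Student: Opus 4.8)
The plan is to establish the pointwise inequality $q\lambda_q^{1/q}(\O)\le p\lambda_p^{1/p}(\O)$ for all $1\le q\le p<+\infty$; inequality \eqref{infall} is then immediate since $\F_{p,q}(\O)=\lambda_p^{1/p}(\O)/\lambda_q^{1/q}(\O)$, and the full monotonicity statement on $[1,+\infty]$ follows by adjoining the endpoints. The case $q=p$ being trivial, I assume $1\le q<p<+\infty$.

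First I would reduce everything to a comparison of Rayleigh quotients by means of a single test-function substitution. Fix a nonnegative $u\in C^1_c(\O)$ and set $v=u^{p/q}$. Since $p/q\ge1$, the map $t\mapsto t^{p/q}$ is $C^1$ on $[0,+\infty)$, so $v\in C^1_c(\O)$ is admissible in \eqref{def.la} for the exponent $q$, with $\nabla v=(p/q)\,u^{p/q-1}\nabla u$. Inserting $v$ into the quotient defining $\lambda_q$ gives $\lambda_q(\O)\le\big(\int_\O|\nabla v|^q\,dx\big)/\big(\int_\O v^q\,dx\big)$, where the denominator equals exactly $\int_\O u^p\,dx$.

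The heart of the argument is to bound the numerator $\int_\O|\nabla v|^q\,dx=(p/q)^q\int_\O u^{p-q}|\nabla u|^q\,dx$. The exponent $p/q$ is chosen precisely so that writing the integrand as $|\nabla u|^q\cdot u^{p-q}$ and applying H\"older's inequality with the conjugate exponents $p/q$ and $p/(p-q)$ reproduces $\int_\O|\nabla u|^p\,dx$ and $\int_\O u^p\,dx$ with no residual powers:
$$\int_\O u^{p-q}|\nabla u|^q\,dx\le\Big(\int_\O|\nabla u|^p\,dx\Big)^{q/p}\Big(\int_\O u^p\,dx\Big)^{(p-q)/p}.$$
Combining the three estimates yields $\lambda_q(\O)\le(p/q)^q\big(\int_\O|\nabla u|^p\,dx\big/\int_\O u^p\,dx\big)^{q/p}$. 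Taking the infimum over nonnegative $u\in C^1_c(\O)$—which, after replacing a generic competitor by its absolute value and mollifying, still computes $\lambda_p(\O)$—and using that $x\mapsto x^{q/p}$ is increasing gives $\lambda_q(\O)\le(p/q)^q\lambda_p^{q/p}(\O)$, i.e. the desired $q\lambda_q^{1/q}(\O)\le p\lambda_p^{1/p}(\O)$.

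The endpoints I would dispatch through the asymptotics \eqref{asyla} and the convention \eqref{laextrem}: letting $q\to1^+$ in the inequality just proved gives $h(\O)\le p\lambda_p^{1/p}(\O)$, while for $p=+\infty$ the quantity $p\lambda_p^{1/p}(\O)\to+\infty$ (because $\lambda_p^{1/p}(\O)\to\rho(\O)^{-1}>0$), so monotonicity extends trivially over all of $[1,+\infty]$. The only genuinely delicate points are the admissibility and chain rule for $v=u^{p/q}$, which are clean precisely because $p/q\ge1$ forces $u^{p/q}\in C^1_c(\O)$, and the remark that the infimum in \eqref{def.la} is unchanged when restricted to nonnegative competitors; both are routine, and the actual content of the proof is the exact matching of exponents furnished by the choice $v=u^{p/q}$.
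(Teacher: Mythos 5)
Your proposal is correct and follows essentially the same route as the paper's proof: the substitution $v=u^{p/q}$, H\"older's inequality with exponents $p/q$ and $p/(p-q)$ to recover $\big(\int_\O|\nabla u|^p\,dx\big)^{q/p}\big(\int_\O|u|^p\,dx\big)^{1-q/p}$, and the extension to the endpoints $q=1$, $p=+\infty$ via the asymptotics \eqref{asyla} and the convention \eqref{laextrem}. The only difference is cosmetic: you restrict to nonnegative competitors to justify the chain rule for $u^{p/q}$, whereas the paper works with $|u|$ implicitly, and both handle this routine point adequately.
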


\begin{proof}
By \eqref{asyla} it is enough to consider the case $1<q<p<\infty$. Let $u\in C^\infty_c(\O)$ and let $v=u^{p/q}$. Then, by H\"older inequality, we get
\[\begin{split}
\la_q(\O)&\le\frac{\int_\O|\nabla v|^qdx}{\int_\O|v|^qdx}=\left(\frac pq\right)^q\frac{\int_\O|\nabla u|^q|u|^{p-q}dx}{\int_\O|u|^pdx}\\
&\le\left(\frac pq\right)^q\frac{\left(\int_\O|\nabla u|^pdx\right)^{q/p}}{\int_\O|u|^pdx}\left(\int_\O|u|^pdx\right)^{1-q/p}=\left(\frac pq\right)^q\left(\frac{\int_\O|\nabla u|^pdx}{\int_\O|u|^pdx}\right)^{q/p}.
\end{split}\]
Since $u$ is arbitrary we obtain
$$q\la^{1/q}_{q}(\O)\le p\la^{1/p}_{p}(\O)$$
as required.
\end{proof}

In general, we do not expect the bound given in \eqref{infall} to be sharp. For instance, as $p\to+\infty$, the right-hand side in \eqref{infall} tends to zero, while it is easy to prove that the minimum of $\F_{\infty,q}$ is strictly positive and attained at any ball. Indeed, since any $\O$ contains a ball of radius $\rho(\O)$, by \eqref{decla}, \eqref{scala} and \eqref{asyla}, we have 
\be\label{laradmon}
\la^{1/q}_q(\O)\le\rho^{-1}(\O)\la_q^{1/q}(B_1)\quad \hbox{for every } 1\le q \le +\infty,
\ee
which clearly implies
$$m_d(\infty,q)=\F_{\infty,q}(B_1), \qquad\hbox{for every }1\le q< +\infty;$$
here we denote by $B^d_r$ the ball in $\R^d$ of radius $r$ centered at the origin, and we omit the dependence on $d$ when there is no ambiguity.

Recently, by exploiting the fact that $\la_2(B_1^d)=j_{d/2-1,1}$, where $j_{d/2-1,1}$ denotes the first root of the $d$-th Bessel function of first kind, Ftouhi (see \cite{Fto}) has noticed that
\be\label{fteq}
\lim_{d\to+\infty}m_{d}(2,1)=1/2.
\ee 
Our next goal is to generalize the limit \eqref{fteq} to every $p,q$. With this aim we introduce the quantity
$$B(s,t)=\int_0^1 \tau^{s-1}(1-\tau)^{t-1}d\tau$$
and recall that, in terms of the Euler's function $\Gamma$, we have
\be\label{gammabeta}
B(s,t)=\frac{\Gamma(s)\Gamma(t)}{\Gamma(s+t)}.
\ee

\begin{lemm}
Let $\O\in\A^d_{all}$ and $s\ge1$. Then,
\be\label{stimaEuler}
\la_p(B_1)\le s^p\left(\frac{\Gamma(sp+d+1)\Gamma(sp-p+1)}{\Gamma(sp+1)\Gamma(sp+d-p+1)}\right).
\ee
\end{lemm}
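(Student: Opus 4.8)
The plan is to bound $\la_p(B_1)$ from above by testing the Rayleigh quotient in \eqref{def.la} with an explicit radial competitor. Since the infimum defining $\la_p(B_1)$ over $C^1_c(B_1)$ agrees, by density of $C^1_c$ in $W^{1,p}_0$, with the infimum over $W^{1,p}_0(B_1)$, it suffices to exhibit one admissible $u\in W^{1,p}_0(B_1)$ and to estimate the quotient $\int_{B_1}|\nabla u|^p\,dx\big/\int_{B_1}|u|^p\,dx$ from above by the right-hand side of \eqref{stimaEuler}.

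The competitor I would use is $u(x)=(1-|x|)^s$. For $s\ge1$ this function is Lipschitz on $\overline{B_1}$ and vanishes on $\partial B_1$, so it lies in $W^{1,p}_0(B_1)$; writing $r=|x|$ one has $|\nabla u(x)|=s(1-r)^{s-1}$. Passing to spherical coordinates and denoting by $\sigma_{d-1}$ the surface measure of the unit sphere, the two integrals become
$$\int_{B_1}|\nabla u|^p\,dx=s^p\,\sigma_{d-1}\int_0^1(1-r)^{(s-1)p}r^{d-1}\,dr,\qquad \int_{B_1}|u|^p\,dx=\sigma_{d-1}\int_0^1(1-r)^{sp}r^{d-1}\,dr.$$
The factor $\sigma_{d-1}$ cancels in the quotient, and each one-dimensional integral is recognized as a Beta function: the first equals $B(d,(s-1)p+1)$ and the second $B(d,sp+1)$, so that the quotient equals $s^p\,B(d,(s-1)p+1)/B(d,sp+1)$.

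Finally, I would invoke the identity \eqref{gammabeta} to convert both Beta functions into $\Gamma$ functions; the common factors $\Gamma(d)$ cancel and the ratio $B(d,(s-1)p+1)/B(d,sp+1)$ reduces to exactly $\Gamma(sp+d+1)\Gamma(sp-p+1)\big/\big(\Gamma(sp+1)\Gamma(sp+d-p+1)\big)$, which gives \eqref{stimaEuler}. There is no real obstacle here: the only genuine decision is the choice of the competitor $(1-|x|)^s$, whose power-of-the-distance structure is precisely what turns the radial integrals into Beta integrals, while the rest is a direct computation. The one point worth making explicit is the admissibility of $u$, which is \emph{not} compactly supported; this is why I would stress at the outset that the infimum in \eqref{def.la} may be computed over $W^{1,p}_0(B_1)$.
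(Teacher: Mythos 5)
Your proof is correct and follows essentially the same route as the paper: both test the Rayleigh quotient with the radial competitor $(1-|x|)^s$, reduce the integrals to Beta functions, and convert via \eqref{gammabeta}. The only difference is cosmetic — you make explicit the admissibility of the non-compactly-supported test function in $W^{1,p}_0(B_1)$, a point the paper passes over with ``clearly''.
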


\begin{proof}
Let $s\ge 1$ and $\phi(x)=(1-|x|)^s$. Clearly $\phi\in W^{1,p}_0(B_1)$ and
$$\int_{B_1}|\phi(x)|^p dx=d\omega_d\int_0^1(1-t)^{sp}t^{d-1}dt=d\omega_d B(d,sp+1).$$
Similarly we have
$$\int_{B_1}|\nabla\phi(x)|^p dx=d\omega_d s^p\int_0^1(1-t)^{(s-1)p}t^{d-1}dt=d\omega_ds^pB(d,sp-p+1).$$
Now, using $\phi$ as a test function in \eqref{def.la}, we obtain
$$\la_p(B_1)\le s^p\left(\frac{B(d,sp-p+1)}{B(d,sp+1)}\right).$$
Finally, \eqref{stimaEuler} follows from \eqref{gammabeta}.
\end{proof}

\begin{lemm}\label{cilindri}
Let $1\le p<+\infty$, $L>0$, and $\omega\in \A_{all}^{d-1}$. Denote by $\O_L=\omega\times(-L/2,L/2)$. Then
\be\label{cilindristime}
\begin{cases}
\la_p(\omega)+\pi_p^p/L^p \le \la_p(\O_L)\le\big(\la^{2/p}_p(\omega)+\pi_p^2/L^2\big)^{p/2}&\hbox{if }p\ge2,\\
\big(\la^{2/p}_p(\omega)+\pi_p^2/L^2\big)^{p/2}\le \la_p(\O_L)\le\la_p(\omega)+\pi_p^p/L^p&\hbox{if }p\le2.
\end{cases}
\ee
In particular
\be\label{limitecilindri}
\lim_{L\to+\infty}\la^{1/p}_p(\O_L)=\la^{1/p}_p(\omega).
\ee
\end{lemm}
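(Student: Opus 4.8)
The plan is to estimate the Rayleigh quotient in \eqref{def.la} by splitting the gradient according to the product structure of $\O_L$. Writing a point of $\O_L$ as $(x,y)$ with $x\in\omega$ and $y\in(-L/2,L/2)$, one has $|\nabla u|^2=|\nabla_x u|^2+|\partial_y u|^2$, hence $|\nabla u|^p=\big(|\nabla_x u|^2+|\partial_y u|^2\big)^{p/2}$. Two elementary facts drive everything, and both reverse at $p=2$: the pointwise inequality $(a+b)^{p/2}\ge a^{p/2}+b^{p/2}$ for $p\ge2$ and $(a+b)^{p/2}\le a^{p/2}+b^{p/2}$ for $p\le2$ (with $a,b\ge0$); and Minkowski's mixed-norm inequality between $L^p$ and $\ell^2$, namely $\big\|(v_1^2+v_2^2)^{1/2}\big\|_{L^p}\le\big(\|v_1\|_{L^p}^2+\|v_2\|_{L^p}^2\big)^{1/2}$ when $p\ge2$, with the reverse inequality when $p\le2$. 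This sign flip is precisely what produces the exchange between the two lines of \eqref{cilindristime}.

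The building blocks are the two slicewise estimates obtained via Fubini's theorem. For $u\in C^\infty_c(\O_L)$ and fixed $y$, the slice $u(\cdot,y)$ lies in $C^\infty_c(\omega)$, so \eqref{def.la} gives $\int_\omega|\nabla_x u|^p\,dx\ge\la_p(\omega)\int_\omega|u|^p\,dx$; integrating in $y$ yields $\int_{\O_L}|\nabla_x u|^p\ge\la_p(\omega)\,J$, where $J=\int_{\O_L}|u|^p$. Likewise $u(x,\cdot)\in C^\infty_c(-L/2,L/2)$, and by \eqref{pi} together with the scaling \eqref{scala} one has $\la_p(-L/2,L/2)=\pi_p^p/L^p$, so that $\int_{\O_L}|\partial_y u|^p\ge(\pi_p^p/L^p)\,J$. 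For the lower bounds I would then proceed as follows. When $p\ge2$, pointwise superadditivity gives $|\nabla u|^p\ge|\nabla_x u|^p+|\partial_y u|^p$, and the two slicewise estimates yield $\int_{\O_L}|\nabla u|^p\ge(\la_p(\omega)+\pi_p^p/L^p)\,J$, i.e. the left inequality in the first line. When $p\le2$, the reverse Minkowski inequality applied to $(|\nabla_x u|,|\partial_y u|)$ gives $\int_{\O_L}|\nabla u|^p\ge\big(I_1^{2/p}+I_2^{2/p}\big)^{p/2}$ with $I_1=\int_{\O_L}|\nabla_x u|^p$ and $I_2=\int_{\O_L}|\partial_y u|^p$; inserting $I_1^{2/p}\ge\la_p^{2/p}(\omega)J^{2/p}$ and $I_2^{2/p}\ge(\pi_p^2/L^2)J^{2/p}$ produces the bound $\big(\la_p^{2/p}(\omega)+\pi_p^2/L^2\big)^{p/2}J$, i.e. the left inequality in the second line.

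For the upper bounds I would test \eqref{def.la} on a separable function $u(x,y)=f(x)g(y)$, with $f\in C^\infty_c(\omega)$ almost optimal for $\la_p(\omega)$ and $g\in C^\infty_c(-L/2,L/2)$ almost optimal for $\la_p(-L/2,L/2)$, both normalized in $L^p$, letting the near-optimality parameters tend to $0$ at the end; this avoids any discussion of the existence of an eigenfunction on the merely finite-measure set $\omega$. For such $u$ one has $|\nabla u|^2=g^2|\nabla_x f|^2+f^2(g')^2$ and $\int_{\O_L}|u|^p=1$. When $p\le2$, pointwise subadditivity gives directly $\int_{\O_L}|\nabla u|^p\le\la_p(\omega)+\pi_p^p/L^p$, the right inequality in the second line; when $p\ge2$, Minkowski's inequality gives $\int_{\O_L}|\nabla u|^p\le\big(\la_p^{2/p}(\omega)+\pi_p^2/L^2\big)^{p/2}$, the right inequality in the first line. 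Finally \eqref{limitecilindri} follows by squeezing: since $\pi_p^p/L^p\to0$ and $\pi_p^2/L^2\to0$ as $L\to+\infty$, both sides of \eqref{cilindristime} converge to $\la_p(\omega)$.

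The only genuinely delicate point is keeping the direction of Minkowski's mixed-norm inequality straight, as this is what forces the swap of the estimates across the threshold $p=2$; the rest is routine bookkeeping, namely the reduction to one-dimensional and $(d-1)$-dimensional Rayleigh quotients by Fubini and the systematic use of near-optimal rather than optimal test functions on $\omega$.
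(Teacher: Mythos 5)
Your proof is correct and follows essentially the same route as the paper's: the same splitting $|\nabla u|^2=|\nabla_x u|^2+|\partial_y u|^2$, pointwise super/sub-additivity of $t\mapsto t^{p/2}$ together with the (reverse) Minkowski inequality in $L^{p/2}$ for the lower bounds, and separable test functions for the upper bounds. The only deviation is your use of near-optimal test functions where the paper invokes exact minimizers of $\la_p(\omega)$ and $\la_p((-L/2,L/2))$; this is a harmless (indeed slightly more careful, since for $p=1$ or general $\omega$ minimizers need not exist in the Sobolev class) variant of the same argument.
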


\begin{proof}
We denote by $(x,y)$ the points in $\R^{d-1}\times\R$. Let $u\in C^{\infty}_c(\O_L)$, then for every $(x,y)\in\O$ we have
$$u(\cdot,y)\in C^{\infty}_c(\omega),\qquad u(x,\cdot)\in C^{\infty}_c(-L/2,L/2).$$
If $p\ge2$, using the super-additivity of the function $t\to t^{p/2}$ and Fubini theorem, we have
\[\begin{split}
\int_{\O_L}|\nabla u|^pdxdy&=\int_{-L/2}^{L/2}\int_\omega\left(|\nabla_x u|^2+|\pa_yu|^2\right)^{p/2}dxdy\\
&\ge\left(\la_p(\omega)+\frac{\pi_p^p}{L^p}\right)\int_{\O_L}|u|^pdxdy.
\end{split}\]
Similarly, if $p\le2$, using Fubini theorem together with the reverse Minkowski inequality
$$\|f+g\|_{L^{p/2}(\O_L)}\ge \|g\|_{L^{p/2}(\O_L)}+\|f\|_{L^{p/2}(\O_L)},$$
 we obtain
\[\begin{split}
\int_{\O_L}|\nabla u|^pdxdy&=\int_{-L/2}^{L/2}\int_\omega\left(|\nabla_x u|^2+|\pa_yu|^2\right)^{p/2}dxdy\\
&\ge\left\{\left(\int_{\O_L}|\nabla_x u|^pdxdy\right)^{2/p}+\left(\int_{\O_L}|\pa_y u|^pdxdy\right)^{2/p}\right\}^{p/2}\\
&\ge\left(\la_p(\omega)^{2/p}+\frac{\pi_p^2}{L^2}\right)^{p/2}\int_{\O_L}|u|^pdxdy.
\end{split}\]
In both cases, the arbitrariness of $u$ proves the left hand side inequalities in \eqref{cilindristime}.

The upper estimates in \eqref{cilindristime} can be proved with the same argument, once chosen a suitable test function. More precisely, we take $u(x)$ and $v_L(y)$ optimal functions respecively for $\la_p(\omega)$ and $\la_p((-L/2,L/2))$, with unitary $L^p$ norm, that is (taking also \eqref{scala} into account) we require:
\be\label{optim}
\|\nabla u\|^p_{L^p(\omega)}=\la_p(\omega),\qquad\|v_L'\|^p_{L^p(-L/2,L/2)}=\pi_p^p/L^p.
\ee
Now, the product function $\phi(x,y)=u(x)v_L(y)$ is admissible in the computation of $\la_q(\O_L)$ and gives
$$\la_p(\O_L)\le\int_{\O_L}|\nabla\phi(x,y)|^pdxdy=\int_{\O_L}\big(|\nabla_x u(x)v(y)|^2+|u(x)v'(y)|^2\big)^{p/2}dxdy.$$
If $p\le2$, by the sub-additivity of the function $t\to t^{p/2}$, \eqref{optim} and Fubini theorem, we get 
$$\la_p(\O_L)\le\la_p(\omega)+\frac{\pi^p}{L^p}.$$
Similarly if $p\ge2$, by \eqref{optim}, Fubini theorem and Minkowski inequality we have that
$$\la_p(\O_L)\le\left(\la^{2/p}_p(\omega)+\frac{\pi_p^2}{L^2}\right)^{p/2},$$
which concludes the proof.
\end{proof}

\begin{rem}
The limit \eqref{limitecilindri} is clearly true also when $p=+\infty$, since in this case $\rho(\O_L)=\rho(\omega)$, as soon as $L$ is large enough.
\end{rem}

We may now prove the general form of limit \eqref{fteq}.

\begin{theo} \label{bigdim}
Let $1\le q< p\le+\infty$. Then the sequence $d\mapsto m_d(p,q)$ is nonincreasing and
\be\label{limitemd}
\lim_{d\to+\infty}m_d(p,q)=\inf_{d\geq 1 } m_d(p,q)= q/p.
\ee 
In particular,
$$\frac{q}{p}\le m_d(p,q)\le m_1(p,q)=\frac{\pi_p}{\pi_q}.$$
\end{theo}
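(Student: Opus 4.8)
The plan is to establish the three assertions in turn: the monotonicity of $d\mapsto m_d(p,q)$, the value of the limit, and the final two-sided bound. The monotonicity and the lower bound will be essentially free from the material already developed, so the real content is the construction of near-optimal domains in high dimension.

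For the monotonicity I would use the cylinder construction of Lemma \ref{cilindri}. Given $\omega\in\A^{d-1}_{all}$, set $\O_L=\omega\times(-L/2,L/2)\in\A^d_{all}$. By \eqref{limitecilindri} (and the subsequent Remark for the exponent $+\infty$) we have $\la_p^{1/p}(\O_L)\to\la_p^{1/p}(\omega)$ and $\la_q^{1/q}(\O_L)\to\la_q^{1/q}(\omega)$ as $L\to+\infty$, hence $\F_{p,q}(\O_L)\to\F_{p,q}(\omega)$. Since each $\O_L$ is admissible in dimension $d$, this gives $m_d(p,q)\le\F_{p,q}(\omega)$ for every $\omega$, and taking the infimum over $\omega$ yields $m_d(p,q)\le m_{d-1}(p,q)$. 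In particular $m_d(p,q)\le m_1(p,q)=\pi_p/\pi_q$, the last equality being the one-dimensional Proposition. Combining this with Proposition \ref{lindqvist}, which gives $m_d(p,q)\ge q/p$, the nonincreasing sequence $m_d(p,q)$ converges to $\inf_{d\ge1}m_d(p,q)\ge q/p$.

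It then remains to realize the value $q/p$ in the limit; the natural candidates are the unit balls $B^d_1$, and it suffices to prove $\limsup_{d\to+\infty}\F_{p,q}(B^d_1)\le q/p$, since $m_d(p,q)\le\F_{p,q}(B^d_1)$. I would bound numerator and denominator separately. For the denominator, the monotonicity of $p\mapsto p\la_p^{1/p}$ (Proposition \ref{lindqvist}) applied between the exponents $1$ and $q$ gives $q\la_q^{1/q}(B^d_1)\ge\la_1^{1/1}(B^d_1)=h(B^d_1)=d$, whence $\la_q^{1/q}(B^d_1)\ge d/q$. For the numerator (when $p<+\infty$) I would feed the estimate \eqref{stimaEuler} with a parameter $s=s_d$ chosen so that $s_d\to+\infty$ while $s_dp/d\to0$ (for instance $s_d=\lfloor\sqrt d\rfloor$). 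Using $\Gamma(z+p)/\Gamma(z)\sim z^p$ as $z\to+\infty$ in the two Gamma-ratios appearing in \eqref{stimaEuler}, namely
\[
\frac{\Gamma(s_dp+d+1)}{\Gamma(s_dp+d-p+1)}\sim(s_dp+d)^p\sim d^p,\qquad
\frac{\Gamma(s_dp-p+1)}{\Gamma(s_dp+1)}\sim(s_dp)^{-p},
\]
the right-hand side of \eqref{stimaEuler} behaves like $s_d^p\,d^p\,(s_dp)^{-p}=(d/p)^p$, so that $\la_p^{1/p}(B^d_1)\le(d/p)(1+o(1))$. (The case $p=+\infty$ is immediate, since $\la_p^{1/p}(B^d_1)=\rho^{-1}(B^d_1)=1$.) Combining the two bounds,
\[
\F_{p,q}(B^d_1)=\frac{\la_p^{1/p}(B^d_1)}{\la_q^{1/q}(B^d_1)}\le\frac{(d/p)(1+o(1))}{d/q}=\frac qp\,(1+o(1)),
\]
while for $p=+\infty$ one gets $\F_{\infty,q}(B^d_1)\le q/d\to0=q/p$. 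This establishes $\limsup_d\F_{p,q}(B^d_1)\le q/p$, closing the chain $q/p\le\inf_{d}m_d(p,q)=\lim_{d}m_d(p,q)\le q/p$.

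The delicate point is the numerator estimate. The simplest choice $s=1$ in \eqref{stimaEuler} only yields $\la_p^{1/p}(B^d_1)\lesssim d/\Gamma(p+1)^{1/p}$, which by Stirling is of order $ed/p$ and so is off by the multiplicative factor $e$; extracting the sharp constant $d/p$ forces the concentration parameter $s$ to grow with $d$ (reflecting that the extremal profile $(1-|x|)^s$ must concentrate near $\partial B^d_1$ as the dimension increases), yet slowly enough that $s_dp$ stays negligible against $d$. Checking that the two Gamma-ratio asymptotics remain valid for such a $d$-dependent choice of $s$ is the one computation that needs care; everything else reduces to the cited lemmas and to elementary monotonicity bookkeeping.
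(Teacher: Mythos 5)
Your proof is correct and follows essentially the same route as the paper: monotonicity in $d$ via the cylinder Lemma \ref{cilindri}, the lower bound $q/p$ from Proposition \ref{lindqvist}, and the matching upper bound from \eqref{stimaEuler} applied to $B_1^d$ with $s\sim\sqrt d$ together with the Gamma-ratio asymptotics. Your direct denominator bound $\la_q^{1/q}(B_1^d)\ge d/q$ is just a reorganized form of the paper's reduction $\F_{p,q}\le q\,\F_{p,1}$ (both are Proposition \ref{lindqvist} applied between the exponents $1$ and $q$), so the two arguments coincide in substance.
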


\begin{proof}
The monotonicity of the sequence follows at once by \eqref{limitecilindri}, hence the limit above exists as well. In order to prove \eqref{limitemd}, first we suppose $q=1$. By applying \eqref{stimaEuler} with $s=\sqrt{d}$, we get
$$\F_{p,1}(B_1^d)=\frac{\la^{1/p}_p(B_1^d)}{d}\le\frac{1}{\sqrt{d}}\cdot\left(\frac{\Gamma(\sqrt{d}p+d+1)\Gamma(\sqrt{d}p-p+1)}{\Gamma(\sqrt{d}p+1)\Gamma(\sqrt{d}p+d-p+1)}\right)^{1/p}.$$
Moreover, using the fact that $\Gamma(s+t)\approx\Gamma(s)s^t $ as $s\to\infty$, we obtain that, as $d\to\infty$,
$$
\frac{1}{\sqrt{d}}\left(\frac{\Gamma(\sqrt{d}p+d+1)\Gamma(\sqrt{d}p-p+1)}{\Gamma(\sqrt{d}p+1)\Gamma(\sqrt{d}p+d-p+1)}\right)^{1/p}\approx\frac{1}{\sqrt{d}}\left(1+\frac{\sqrt{d}}{p}\right).
$$
Hence, by applying also \eqref{infall}, we obtain
$$1/p\le\lim_{d\to\infty} m_d(p,1)\le\limsup_{d\to\infty}\F_{p,1}(B_1^d)\le1/p.$$
To achieve the general case we notice that, for all $\O\subset\R^d$, we have
$$q/p\le m_d(p,q)\le\F_{p,q}(\O)=\F_{p,1}(\O)\F^{-1}_{q,1}(\O)\le q\F_{p,1}(\O),$$
where the last inequality follows again by \eqref{infall}. Then
$$q/p\le\lim_{d\to\infty}m_d(p,q)\le q\lim_{d\to\infty}m_d(p,1)=q/p$$
as required. Finally, the last statement is an easy consequence of \eqref{limitemd}.
\end{proof}

We now turn our attention to the quantity $M_d(p,q)$ and we notice that limit \eqref{limitecilindri} also implies that the sequence $d\mapsto M_d(p,q)$ is nondecreasing and hence
$$\frac{\pi_p}{\pi_q}=M_1(p,q)\le M_d(p,q).$$ 
Our next result deals with the upper bound for $M_d(p,q)$. We recall that the (relative) $p$-capacity of a set $E\subset\O$ is defined as
$$
\ccap_p(E;\O)=\inf\left\{\int_\O|\nabla u|^pdx\ :\ u\in W^{1,p}_0(\O),\ u\ge 1\ \hbox{a.e. in a neighborhood of }E\right\}.
$$
A set $E\subset\R^d$ is said to be of zero $p$-capacity if
$$\ccap_p(E\cap\O;\O)=0\qquad\hbox{for all }\O\in\A^d_{all};$$
in this case we simply write $\ccap_p(E)=0$. For a comprehensive introduction to $p$-capacity we refer the reader to \cite{Hein} and \cite{Maz}. Here we only point out that, given $1< p<+\infty$ and $E$ a relatively closed subset of $\O$, then
$$\ccap_p(E)=0\ \Longrightarrow\la_p(\O\setminus E)=\la_p(\O).$$ 
Moreover, using the fact that when $p>d$ even a single point has nonzero $p$-capacity, in \cite{Poli} it is shown the following.

\begin{theo}\label{Poli}
Let $d\in\N$, $d\ge 1$ and $d<p<+\infty$. There exists a positive constant $C_{p,d}$, depending on $p$ and $d$, such that for every bounded open set $\O\subset\R^d$ it holds
$$\la^{1/p}_p(\O)\ge C_{d,p}\rho^{-1}(\O).$$
\end{theo}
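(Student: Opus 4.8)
The plan is to deduce the inequality from a Poincaré inequality of the form
\[
\|u\|_{L^p(\Omega)}^p\le C_{d,p}\,\rho^p(\Omega)\,\|\nabla u\|_{L^p(\Omega)}^p\qquad\text{for all }u\in W^{1,p}_0(\Omega),
\]
since by \eqref{def.la} this is precisely $\lambda_p(\Omega)\ge C_{d,p}^{-1}\rho^{-p}(\Omega)$. By the scaling property \eqref{scala} together with $\rho(t\Omega)=t\,\rho(\Omega)$, both sides of the target inequality transform identically under dilations, so I may rescale and assume $\rho(\Omega)=1$, reducing the statement to a uniform Poincaré constant on the class of bounded open sets of inradius one. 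The feature that makes $p>d$ essential is Morrey's embedding $W^{1,p}\hookrightarrow C^{0,1-d/p}$, which is the analytic counterpart of the fact, recalled before the statement, that single points carry positive $p$-capacity.

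First I would record the key consequence of $p>d$: every $u\in W^{1,p}_0(\Omega)$ admits a continuous representative vanishing at every point of $\R^d\setminus\Omega$. This follows by extending $u$ by zero to an element of $W^{1,p}(\R^d)$, approximating it in $W^{1,p}$ by functions $u_n\in C^\infty_c(\Omega)$, and using the sup-norm bound $\|u_n-u\|_{L^\infty(\R^d)}\le C\|u_n-u\|_{W^{1,p}(\R^d)}$, valid for $p>d$; since each $u_n$ vanishes outside $\Omega$, so does the uniform limit.

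Next comes the localization. I would tile $\R^d$ by a grid of cubes $\{Q_j\}$ of side $\ell=\rho(\Omega)$, and for each $Q_j$ meeting $\Omega$ consider the concentric triple cube $\tilde Q_j$ of side $3\ell$. Since $\tilde Q_j$ contains a ball of radius $3\ell/2>\rho(\Omega)$, it cannot be contained in $\Omega$, so there is a point $z_j\in\tilde Q_j\setminus\Omega$, and by the previous step $u(z_j)=0$. Morrey's inequality on the convex set $\tilde Q_j$ then yields, with a scale-correct constant,
\[
\sup_{Q_j}|u|=\sup_{Q_j}|u-u(z_j)|\le C_{d,p}\,\ell^{\,1-d/p}\,\|\nabla u\|_{L^p(\tilde Q_j)},
\]
whence $\int_{Q_j}|u|^p\le\ell^d\sup_{Q_j}|u|^p\le C_{d,p}^p\,\ell^p\int_{\tilde Q_j}|\nabla u|^p$. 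Summing over the finitely many cubes meeting $\Omega$ (finiteness uses that $\Omega$ is bounded) and invoking the bounded overlap of the enlargements, each point lying in at most $3^d$ of the $\tilde Q_j$, gives $\int_\Omega|u|^p\le C_{d,p}^p\,3^d\,\rho^p\int_\Omega|\nabla u|^p$, which is the desired estimate.

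The hard part is the first step: justifying that the continuous representative genuinely vanishes at the boundary-adjacent point $z_j$, even when $\Omega^c\cap\tilde Q_j$ happens to be a Lebesgue-null set. This is exactly where the positivity of the $p$-capacity of points is indispensable, and the uniform-approximation argument above is the way I would make it rigorous. The remaining ingredients, namely the scale-correct form of Morrey's inequality on $\tilde Q_j$ and the bounded-overlap summation, are routine once the constants are tracked through the dilation $x\mapsto x/\ell$.
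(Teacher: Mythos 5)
Your argument is correct, but there is nothing in the paper to compare it against: Theorem \ref{Poli} is imported from Poliquin's work \cite{Poli}, and the paper gives no proof of it (the sentence introducing the statement indicates that the cited proof rests on the fact that a single point has positive $p$-capacity when $p>d$; the only argument the paper supplies is the approximation in the remark that follows, extending the bound to unbounded sets). Judged as a self-contained substitute for the citation, your proof holds up: the reduction by scaling to $\rho(\O)=1$, the existence of a point $z_j\in\tilde Q_j\setminus\O$ (legitimate because the ball inscribed in the triple cube has radius $\tfrac32\rho(\O)>\rho(\O)$), the scale-invariant seminorm form of Morrey's inequality $[u]_{C^{0,1-d/p}(\tilde Q_j)}\le C_{d,p}\|\nabla u\|_{L^p(\tilde Q_j)}$ on cubes, and the bounded-overlap summation combine correctly into the Poincar\'e inequality $\|u\|^p_{L^p(\O)}\le C\,\rho^p(\O)\,\|\nabla u\|^p_{L^p(\O)}$, which is equivalent to the claim. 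Two minor observations. First, what you call the hard part --- that the continuous representative of a $W^{1,p}_0(\O)$ function vanishes on $\R^d\setminus\O$ --- can be skipped entirely: the paper defines $\la_p(\O)$ in \eqref{def.la} as an infimum over $u\in C^1_c(\O)$, and such a $u$, extended by zero, is a $C^1_c(\R^d)$ function vanishing outside a compact subset of $\O$, so the evaluation $u(z_j)=0$ is immediate; your approximation argument is the standard (and correct) way to treat general $W^{1,p}_0$ functions, but it is not needed for the stated inequality. Second, the overlap count $3^d$ is what holds for almost every point (points lying on the grid hyperplanes can belong to up to $4^d$ enlarged cubes), which is all the integration uses. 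A dividend of your route worth recording: it nowhere uses boundedness of $\O$ in an essential way (the sum over cubes is a countable monotone sum), so it yields the extension to unbounded sets of finite inradius directly, which the paper instead obtains by the truncation argument of the remark following Theorem \ref{Poli}.
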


\begin{rem}
Theorem \ref{Poli} can be extended to the whole class $\A^d_{all}$ by means of a simple approximation argument. Indeed, it is sufficient to note that, if $\O\in\A^d_{all}$ is unbounded, and we set $\O_n:=\O\cap B_n$, it holds
$$\lim_{n\to+\infty}\rho(\O_n)=\rho(\O),\qquad\la_p(\O)=\lim_{n\to+\infty}\la_p(\O_n),$$
and, by Theorem \eqref{Poli},
$$\la_p(\O_n)\ge C_{d,p}\rho^{-p}(\O_n)\qquad\hbox{ for every }n\in\N.$$
Passing to the limit as $n\to+\infty$ in the inequality above gives the conclusion.
\end{rem}

\begin{theo}\label{theo.supall}
Let $1\le q<p\le\infty$. Then
$$
\begin{cases}
\ds M_d(p,q)<\frac{\la_p^{1/p}(B_1)}{C_{d,q}}&\hbox{if }d< q,\\
M_d(p,q)=+\infty&\hbox{otherwise,}
\end{cases}
$$
where $C_{d,q}$ is the constant given by Theorem \ref{Poli}.
\end{theo}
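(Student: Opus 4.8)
The plan is to prove the two regimes of the dichotomy separately: finiteness (with the explicit bound) when $d<q$, and $M_d(p,q)=+\infty$ when $q\le d$.

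\emph{Finiteness when $d<q$.} Here $d<q<p$, so Theorem~\ref{Poli}, in the form extended to $\A^d_{all}$ by the preceding remark, applies to the exponent $q$ and gives $\la_q^{1/q}(\O)\ge C_{d,q}\rho^{-1}(\O)$ for every $\O\in\A^d_{all}$. For the numerator I would use \eqref{laradmon} with the exponent $p$ in place of $q$ (the estimate holds for every exponent), namely $\la_p^{1/p}(\O)\le\rho^{-1}(\O)\la_p^{1/p}(B_1)$, which just expresses that $\O$ contains a ball of radius $\rho(\O)$, via \eqref{decla} and \eqref{scala}. Dividing the two estimates, the factor $\rho^{-1}(\O)$ cancels and yields
\[
\F_{p,q}(\O)=\frac{\la_p^{1/p}(\O)}{\la_q^{1/q}(\O)}\le\frac{\la_p^{1/p}(B_1)}{C_{d,q}}\qquad\text{for every }\O\in\A^d_{all},
\]
so that $M_d(p,q)\le\la_p^{1/p}(B_1)/C_{d,q}<+\infty$.

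\emph{Strictness.} To upgrade this to a strict inequality I would argue that the numerator and denominator estimates cannot be asymptotically saturated at the same time. Equality in the numerator bound forces $\la_p(\O)=\la_p(B_{\rho(\O)})$, i.e. $\O$ coincides with its inradius ball up to a set of zero $p$-capacity; for such a set the denominator equals $\la_q^{1/q}(B_1)\rho^{-1}(\O)$, which is strictly larger than $C_{d,q}\rho^{-1}(\O)$ because the constant $C_{d,q}$ of Theorem~\ref{Poli} is not optimal for the ball (the ball maximizes, rather than minimizes, the product $\la_q^{1/q}(\cdot)\rho(\cdot)$). Thus each admissible set satisfies a strict inequality, and a compactness/rigidity argument—available precisely because $q>d$ keeps the competitors in a compact regime after the normalization $\rho\equiv1$—promotes this to $M_d(p,q)<\la_p^{1/p}(B_1)/C_{d,q}$.

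\emph{Infiniteness when $q\le d$.} Here I would fix a bounded domain $\O_0$ (say a cube) and produce a sequence $\O_n\subset\O_0$ with $\la_q(\O_n)$ bounded and $\la_p(\O_n)\to+\infty$; since $\F_{p,q}(\O_n)=\la_p^{1/p}(\O_n)/\la_q^{1/q}(\O_n)$, this gives $M_d(p,q)=+\infty$. The mechanism is to drill from $\O_0$ a periodic array of obstacles whose $q$-capacity is negligible but whose $p$-capacity clogs the domain. When $p>d$ the cleanest choice is a finite grid $E_n$ of points of mesh $1/n$: as $q\le d$ each point has zero $q$-capacity, hence the closed finite set $E_n$ does too and $\la_q(\O_0\setminus E_n)=\la_q(\O_0)$, while $\rho(\O_0\setminus E_n)\to0$ gives $\la_p^{1/p}(\O_0\setminus E_n)\ge C_{d,p}\rho^{-1}(\O_0\setminus E_n)\to+\infty$ by Theorem~\ref{Poli}. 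When instead $q<p\le d$ (so $q<d$), I replace the points by balls $B(x_i,r_n)$ centred on a grid of mesh $\delta_n\to0$, with $r_n=\delta_n^{\beta}$ and $\beta\in\big(\tfrac{d}{d-q},\tfrac{d}{d-p}\big)$ (for $p=d$ one keeps $\beta>\tfrac{d}{d-q}$ and uses the logarithmic $d$-capacity). Since the relative $s$-capacity of $B_r$ scales like $r^{d-s}$, the total $q$-capacity density behaves like $\delta_n^{-d}r_n^{\,d-q}\to0$ while the $p$-capacity density behaves like $\delta_n^{-d}r_n^{\,d-p}\to+\infty$; the window for $\beta$ is nonempty precisely because $q<p$.

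\emph{Main obstacle.} The heart of the matter is the regime $q<p\le d$, where one must turn the capacity bookkeeping into genuine spectral statements: a vanishing $q$-capacity density must keep $\la_q(\O_n)$ bounded (an upper bound obtained by a capacitary correction of the first $\la_q(\O_0)$-eigenfunction), while a diverging $p$-capacity density must force $\la_p(\O_n)\to+\infty$ (a lower bound of Cioranescu–Murat/homogenization type, via a capacitary Poincaré inequality on each perforated cell). Quantifying these two homogenization limits simultaneously is the technical core; by comparison, in the finiteness part the only delicate point is the strictness of the bound for $d<q$.
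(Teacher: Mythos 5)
Your treatment of the first two cases coincides with the paper's: for $d<q$ you divide \eqref{laradmon} (applied to $p$) by Theorem \ref{Poli} (applied to $q$), and for $q\le d<p$ you remove a finite set of points, which has zero $q$-capacity but shrinks the inradius, exactly as in the paper. The genuine divergence is in the core case $q<p\le d$. You propose the Cioranescu--Murat route: perforate a fixed bounded domain by balls of radius $r_n=\delta_n^{\beta}$ on a grid of mesh $\delta_n$, with $\beta\in\big(d/(d-q),\,d/(d-p)\big)$, so that the perforation is sub-critical for $q$ (keeping $\la_q(\O_n)$ bounded) and super-critical for $p$ (forcing $\la_p(\O_n)\to+\infty$); your exponent window matches the paper's condition $n^{d/(p-d)}\ll r_n\ll n^{d/(q-d)}$. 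This is precisely the alternative proof the paper sketches in Remark \ref{cioranescuproof} and substantiates in Appendix \ref{sapp}. The paper's main proof is different and avoids homogenization limits altogether: it takes growing cubes $(-(n+1/2),n+1/2)^d$ perforated by $\Z^d$-translates of a fixed Cantor-type compact set $E_p$ with $\ccap_p(E_p)>0$ and $\ccap_q(E_p)=0$ (Lindqvist), so that $\la_p(\O_n)\ge C(d,p,E_p)>0$ uniformly by a single fixed-scale Poincar\'e inequality (Maz'ja, Theorem 10.1.2), while $\la_q(\O_n)$ equals $\la_q$ of the whole cube, which tends to $0$ by scaling. The trade-off: in the paper the ratio blows up because the denominator vanishes along domains of growing diameter, using only static capacity facts quoted from the literature; in your version the numerator diverges on a fixed domain, but this requires two quantitative homogenization theorems (sub-critical and super-critical) which for $p\ne2$ are nontrivial to source---the paper devotes its appendix precisely to proving them, so your proposal is correct but leaves its technical core as a citation.

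Two caveats. First, in your route the endpoint $q=1$ needs separate care: the sub-critical statement for the Cheeger constant is not covered by the standard results you invoke; the paper's main proof sidesteps this by obtaining $\la_q(\O_n)=\la_q(\hbox{cube})$ for every $q>1$ and then passing to the limit $q\to1^+$ to handle $h$. Second, your ``strictness'' argument in the case $d<q$ is not a proof: saturation of \eqref{laradmon} by a maximizing \emph{sequence} (rather than by a single maximizer) is not addressed, and the rigidity claim plus the appeal to an unspecified ``compactness/rigidity argument'' is left entirely vague. This is not essential, though: dividing the two estimates already gives $M_d(p,q)\le\la_p^{1/p}(B_1)/C_{d,q}$, which is all that the paper's own one-line proof of this case establishes.
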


\begin{proof}
The case when $d<q$ follows by combining Theorem \ref{Poli} (applied to $\la_q$) and inequality \eqref{laradmon} (applied to $\la_p$). 

The case $1\le q\le d<p\le \infty$ is a consequence of the fact that if $1<q\le d<p$ then a single point has zero $q$-capacity. More precisely, let $(x_n)$ be a dense sequence in a ball $B\subset\R^d$ and define 
$$\O_n:=B\setminus\bigcup_{i=1}^{n}\{x_i\}.$$
Since $\ccap_q(\bigcup_{i=1}^{n}\{x_i\})=0$, we have $\la_q(\O_n)=\la_q(B)$ for every $n\in\N$. Taking into account \eqref{asyla}, we have also $h(\O_n)=h(B)$ for every $n\in\N$. Moreover, since $\rho(\Omega_n)\to0$, by Theorem \ref{Poli} or \eqref{laextrem}, we have that $\la^{1/p}_p(\O_n)\to+\infty$. Therefore $\F_{p,q}(\O_n)\to+\infty$ for every $1\le q<d<p$.

The case when $1\le q<p\le d$ is more delicate. Our argument is inspired by the example exhibited in the Appendix A of \cite{BS}. Given $1<p\leq d$ we construct a sequence of open bounded sets $\O_n\subset\R^d$ such that for every $q<p$
$$\lim_{n\to\infty}\F_{p,q}(\O_n)=+\infty.$$
Let $Q= (-1/2,1/2)^d$. Being $1<p\leq d$, it is well known that there exists a compact set $E_p\subset[0,1]^d$ such that $\ccap_p(E_p)>0$ and $\ccap_q(E_p)=0$ when $1<q<p$ (see Lemma 7.1 in \cite{Lind93}; for instance, $E_p$ can be constructed as a Cantor set). By translating and rescaling the compact set $E_p$, we can assume that $E_p\subset[-1/4,1/4]^d$. Then we consider the open sets
$$\O_n=(-(n+1/2),n+1/2)^d\setminus\bigcup_{z\in\Z_n^d}(E_p+z),$$
where $\Z^d_n=\Z^d\cap[-n,n]^d$ and
$$E=\bigcup_{n\in \N}\O_n=\R^d\setminus \bigcup_{z\in\Z^d}(E_p+z).$$
Being $\ccap_p(E_p)>0$, by Theorem 10.1.2 in \cite{Maz}, we have that
\be\label{Maz}
\min\bigg\{\frac{\int_Q|\nabla u|^p\,dx}{\int_Q|u|^p\,dx}\ :\ u\in W^{1,p}(Q),\ u=0\hbox{ on }E_p\bigg\}= C(d,p,E_p)>0.
\ee
Now, since any function $u\in C_c^{\infty}(E)$ when restricted to $Q+z$, with $z\in \Z^d$, vanishes on a translated copy of $E_p$, \eqref{Maz} readily implies 
$$\la_p(E)\ge C(d,p,E_p).$$
Then, by monotonicity we have
\be\label{p}
\la_ p(\O_n)\ge\la_p(E)\ge C(d,p,E_p)>0,
\ee
for every $n\in\N$. Moreover, for every $q>1$, being $\ccap_q(E_p)=0$, we have that
$$\la_q(\O_n)=\la_q((-(n+1/2),n+1/2)^d)$$
and hence
$$h(\O_n)= h((-(n+1/2),n+1/2)^d)$$
as well. This gives
\be\label{q}
\la^{1/q}_q(\O_n)=(2n+1)^{-1} \la^{1/q}_q(Q)\to0\qquad\hbox{as }n\to+\infty
\ee
for every $q\ge1$. By combining \eqref{p} and \eqref{q} the thesis is easily achieved.
\end{proof}

\begin{rem}\label{cioranescuproof}
The case $1\le q< p\le d$ in the previous theorem can be also proved by constructing a sequence $\O_n$ satisfying:
\be\label{cioranescustrat}
\la_q(\O_n)\to \la_q(B_1), \qquad \la_p(\O_n)\to+\infty.
\ee
To do this, one can consider the sequence $\O_n$ obtained by removing from the unit ball a periodic array of spherical holes of size $r_n$, where
$$\begin{cases}
n^{d/(p-d)}\ll r_n\ll n^{d/(q-d)}&\text{if }p<d;\\
e^{-n^{d/(d-1)}}\ll r_n\ll n^{d/(q-d)}&\text{if }p=d.
\end{cases}$$
Then classical results of shape optimization theory can be used to get \eqref{cioranescustrat} (see \cite{ciomur} and references therein). We devote Appendix \ref{sapp} to give a self-contained proof.
\end{rem}


\section{Optimization in $\A^d_{convex}$}\label{sconvex}

In this section we consider the optimization problems in the class $\A^d_{convex}$. We remark that also in this case Lemma \ref{cilindri} provides the monotonicity properties:
$$d\mapsto \overline{m}_d(p,q)\ \hbox{is nonincreasing} \hbox{ and }d\mapsto \overline{M}_d(p,q)\ \hbox{is nondecreasing}.$$
To carry on our analysis we use two fundamental inequalities which hold for every $\O\in\A^d_{convex}$:
\begin{itemize}
\item the \textit{Hersch-Protter inequality}:
\be\label{HP1}
\rho(\O)\lambda^{1/p}_p(\O)> \frac {\pi_p}{2};
\ee
\item the \textit{Buser inequality}:
\be\label{Buser}
\frac{\lambda_p^{1/p}(\O)}{h(\O)}< \frac{\pi_p}{2}.
\ee
\end{itemize}
Inequality \eqref{HP1} was first proved in \cite{Her} and \cite{Prot} when $p=2$, and then extended to general case in \cite{bra18}, while inequality \eqref{Buser} is proved in \cite{Pa15} in the planar linear case, and in \cite{bra20} in the general one. Both inequalities are sharp, as one can verify by taking a sequence of thin slab domains $\O_n:=[0,1]\times[0,1/n]$, see for instance \cite{bra18} and \cite{bra20}. As a consequence one has that
$$\overline{M}_d(p,1)=\frac{\pi_p}{2}, \qquad \overline{M}_{d}(\infty,q)=\frac{2}{\pi_q},$$
so that the following conjecture formulated by Parini in \cite{Pa15}, is satisfied in the particular cases $p=+\infty$ or $q=1$.

\begin{conj}\label{conj}
Let $1\le q<p\le +\infty$. Then we have
$$\overline{M}_{d}(p,q)=\frac{\pi_p}{\pi_q},$$
and no maximizer set exists. 
\end{conj}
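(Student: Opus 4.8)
The plan is to reduce the whole statement to a single sharp inequality and to treat the value of the supremum and the non-attainment together. First I would separate the lower from the upper bound. For the lower bound $\overline{M}_d(p,q)\ge\pi_p/\pi_q$ I would use the thin-cylinder construction already at hand: by \eqref{limitecilindri}, for a convex $\omega\in\A^{d-1}_{convex}$ one has $\F_{p,q}(\omega\times(-L/2,L/2))\to\F_{p,q}(\omega)$ as $L\to+\infty$, whence $\overline{M}_d(p,q)\ge\overline{M}_{d-1}(p,q)$; iterating down to the one-dimensional case, where $\F_{p,q}$ is identically $\pi_p/\pi_q$, gives $\overline{M}_d(p,q)\ge\pi_p/\pi_q$. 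Thus everything follows from the strict upper bound
\[
\F_{p,q}(\O)<\frac{\pi_p}{\pi_q}\qquad\text{for every }\O\in\A^d_{convex},\ 1\le q<p\le+\infty,
\]
since this simultaneously forces $\overline{M}_d(p,q)=\pi_p/\pi_q$ and rules out a maximizer (a maximizer would have to attain the value $\pi_p/\pi_q$, which the strict inequality forbids).

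It is convenient to rephrase the target as the monotonicity statement that $p\mapsto\la_p^{1/p}(\O)/\pi_p$ is strictly decreasing on convex sets, that is $\la_p^{1/p}(\O)/\pi_p<\la_q^{1/q}(\O)/\pi_q$. The two extreme cases are precisely the sharp inequalities recalled in this section: taking $q=1$ and using $\pi_1=2$, $\la_1(\O)=h(\O)$, the inequality is the strict Buser bound \eqref{Buser}; taking $p=+\infty$ and using $\pi_\infty=2$, $\la_\infty^{1/\infty}(\O)=\rho^{-1}(\O)$, it is the Hersch--Protter bound \eqref{HP1}, rewritten as $\rho^{-1}(\O)<(2/\pi_q)\la_q^{1/q}(\O)$. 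The real difficulty lies in the interior regime $1<q<p<+\infty$.

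The first attempt I would make, and discard, is the naive combination of the two endpoint inequalities. Bounding $\la_p(\O)$ from above by the inradius ball, $\la_p^{1/p}(\O)\le\la_p^{1/p}(B_1)/\rho(\O)$, and $\la_q(\O)$ from below by Hersch--Protter, $\la_q^{1/q}(\O)>\pi_q/(2\rho(\O))$, yields only $\F_{p,q}(\O)<2\la_p^{1/p}(B_1)/\pi_q$ --- essentially the content of Proposition \ref{convexbound} --- whose constant strictly exceeds $\pi_p/\pi_q$, because $\la_p^{1/p}(B_1)>\pi_p/2$. The obstruction is structural: these estimates control $\la_p$ and $\la_q$ separately through the geometric quantities $h$ and $\rho$, and no inequality bounding $\la_p$ directly by $\la_q$ with the sharp slab constant emerges from multiplying them.

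The approach I would then pursue is a direct one-dimensional reduction adapted to convexity. For convex $\O$ the distance function $d_\O$ satisfies $|\nabla d_\O|=1$ a.e., the superlevel sets $\{d_\O>t\}$ are convex and nested, and $t\mapsto\HH^{d-1}(\{d_\O=t\})$ is nonincreasing on $(0,\rho(\O))$. Using test functions of the form $\phi=g(d_\O)$ together with the coarea formula, one reduces the Rayleigh quotients to weighted one-dimensional quotients on $(0,\rho(\O))$ carrying the common weight $w(t)=\HH^{d-1}(\{d_\O=t\})$; the target would then follow from the one-dimensional comparison $\mu_p^{1/p}(w)/\mu_q^{1/q}(w)\le\pi_p/\pi_q$ for the associated weighted eigenvalues $\mu_p(w),\mu_q(w)$, with equality only in the constant-weight (slab) case, where a weighted Sturm--Liouville argument in the spirit of Proposition \ref{lindqvist} should apply. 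The main obstacle --- and the reason the statement remains only a conjecture --- is that this reduction is intrinsically one-sided: functions $g(d_\O)$ yield an \emph{upper} bound for $\la_p(\O)$ by neglecting the tangential gradient, whereas one needs a matching \emph{lower} bound for $\la_q(\O)$ in terms of the \emph{same} weight $w$, and producing such a weighted Poincaré-type lower bound for arbitrary convex bodies is exactly the step that fails. Without a matched two-sided reduction, the sharp constant $\pi_p/\pi_q$ in the interior regime cannot be reached, which is precisely where the whole difficulty concentrates.
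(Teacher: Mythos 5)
You should first be aware that the statement you attacked is not a theorem of this paper: it is Conjecture \ref{conj} (due to Parini), which the authors explicitly say they cannot prove and which they record as open even for $d=2$. Judged as an attempt at this open problem, your proposal contains a genuine gap --- one that, to your credit, you identify yourself. The parts you do establish are correct and coincide with what the paper establishes: the thin-cylinder argument via \eqref{limitecilindri} gives $\overline{M}_d(p,q)\ge\overline{M}_{d-1}(p,q)\ge\pi_p/\pi_q$; the strict inequality $\F_{p,q}(\O)<\pi_p/\pi_q$ for every convex $\O$ would indeed force both the value of the supremum and non-attainment; and the endpoint cases $q=1$ and $p=+\infty$ are exactly the Buser inequality \eqref{Buser} and the Hersch--Protter inequality \eqref{HP1}, which is precisely how the paper verifies the conjecture in those two cases. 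You are also right that multiplying the two endpoint bounds only reproduces Proposition \ref{convexbound}, whose constant $2\la_p^{1/p}(B_1)/\pi_q$ strictly exceeds $\pi_p/\pi_q$.

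The gap is the entire interior regime $1<q<p<+\infty$. Your distance-function reduction is intrinsically one-sided: test functions of the form $g(d_\O)$ together with the coarea formula bound $\la_p(\O)$ from \emph{above} by a weighted one-dimensional quotient with weight $w(t)=\HH^{d-1}(\{d_\O=t\})$, but the argument requires a matching \emph{lower} bound for $\la_q(\O)$ in terms of the same weight, and no such bound is available (nor known); without it the sharp slab constant is never reached. A proposal whose final step is an explanation of why that step fails is an analysis of the difficulty, not a proof, so the conjecture remains exactly as open after your argument as before it. For comparison, the paper's own partial progress follows a different and complementary route: Proposition \ref{convexbound} gives the finiteness $\overline{M}_d(p,q)\le\pi_p\min\{q/2,d/\pi_q\}$, and Remark \ref{massimi} sets up a dichotomy on maximizing sequences of unit volume --- if some maximizing sequence has diameters tending to infinity, then $\overline{M}_d(p,q)=\overline{M}_{d-1}(p,q)$, which for $d=2$ would yield the value asserted in the conjecture; if instead every maximizing sequence stays (up to translations) in a compact set, then a convex maximizer exists, contradicting the non-attainment part. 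Neither your route nor the paper's closes this gap.
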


Although we are not able prove the conjecture we show the following estimates.

\begin{prop}\label{convexbound}
Let $1\le q<p\le+\infty$. Then, for all $\O\in\A^d_{convex}$ we have 
$$\max\Big\{\frac{q}{p},\frac{\pi_p}{d\pi_q}\Big\}\le\overline{m}_d(p,q)\le\overline{M}_d(p,q)\le\pi_p\min\Big\{\frac{q}{2},\frac{d}{\pi_q}\Big\}.$$
\end{prop}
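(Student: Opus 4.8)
The plan is to estimate the numerator $\lambda_p^{1/p}(\O)$ and the denominator $\lambda_q^{1/q}(\O)$ of $\F_{p,q}(\O)$ separately, using only the two convex inequalities \eqref{HP1} and \eqref{Buser}, the monotonicity from Proposition \ref{lindqvist}, and one elementary geometric fact valid for every open set:
$$h(\O)\le \frac{d}{\rho(\O)}.$$
This follows by inscribing in $\O$ a ball $B$ of radius $\rho(\O)$ and using that $h$ is nonincreasing under inclusion (directly from \eqref{cheegerdef}, since every competitor $E\Subset B$ is also a competitor $E\Subset\O$), together with $h(B)=d/\rho(\O)$.

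For the lower bounds, the inequality $\overline m_d(p,q)\ge q/p$ is immediate: Proposition \ref{lindqvist} holds for all domains, so a fortiori $\overline m_d(p,q)\ge m_d(p,q)\ge q/p$ by \eqref{infall}. For the bound $\overline m_d(p,q)\ge \pi_p/(d\pi_q)$ I would bound the numerator from below by \eqref{HP1}, namely $\lambda_p^{1/p}(\O)\ge \pi_p/(2\rho(\O))$, and the denominator from above by combining \eqref{Buser} with the geometric fact, namely $\lambda_q^{1/q}(\O)\le \tfrac{\pi_q}{2}h(\O)\le d\pi_q/(2\rho(\O))$. Taking the ratio, the factor $\rho(\O)$ cancels and yields $\F_{p,q}(\O)\ge \pi_p/(d\pi_q)$.

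For the upper bounds I proceed symmetrically. To obtain $\overline M_d(p,q)\le \pi_p d/\pi_q$ I bound the numerator from above via \eqref{Buser} and the geometric fact, $\lambda_p^{1/p}(\O)\le \tfrac{\pi_p}{2}h(\O)\le d\pi_p/(2\rho(\O))$, and the denominator from below via \eqref{HP1}, $\lambda_q^{1/q}(\O)\ge \pi_q/(2\rho(\O))$; the ratio gives $d\pi_p/\pi_q$. To obtain the competing bound $\overline M_d(p,q)\le \pi_p q/2$ I again use \eqref{Buser} on the numerator, $\lambda_p^{1/p}(\O)\le \tfrac{\pi_p}{2}h(\O)$, but now I estimate $h(\O)$ against the denominator itself: Proposition \ref{lindqvist} applied to the exponents $1$ and $q$ gives $h(\O)=\lambda_1^{1/1}(\O)\le q\,\lambda_q^{1/q}(\O)$, whence $\F_{p,q}(\O)\le \pi_p q/2$. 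Combining the two upper estimates produces the claimed minimum.

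Since every step is a direct substitution into the quoted inequalities, there is no genuine analytic difficulty here; the only real obstacle is the bookkeeping at the endpoints $p=+\infty$ and $q=1$, where \eqref{HP1} and \eqref{Buser} degenerate from strict inequalities to equalities. There one reads the relevant quantities directly from \eqref{laextrem} (so that $\lambda_\infty^{1/\infty}=\rho^{-1}$ and $\lambda_1^{1/1}=h$), obtaining the endpoint versions of \eqref{HP1} and \eqref{Buser} as non-strict inequalities by letting $p\to+\infty$ or $q\to1^+$, and checks that each of the four chains above survives as a non-strict inequality. Passing to the infimum, respectively supremum, over $\A^d_{convex}$ then yields the stated bounds.
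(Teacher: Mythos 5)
Your proposal is correct and follows essentially the same route as the paper: both rest on the Hersch--Protter inequality \eqref{HP1}, the Buser inequality \eqref{Buser}, the bound $h(\O)\rho(\O)\le d$ (which the paper obtains from \eqref{laradmon} with $p=1$, i.e.\ the same inclusion-monotonicity argument you give), and the monotonicity $h(\O)\le q\,\la_q^{1/q}(\O)$ from Proposition \ref{lindqvist}, combined in the same four chains of estimates. Your explicit treatment of the endpoint cases $p=+\infty$ and $q=1$, where \eqref{HP1} and \eqref{Buser} degenerate to equalities, is a minor point the paper passes over silently, but it does not change the argument.
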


\begin{proof}
We first notice that, being $h(B_1)=d$, inequality \eqref{laradmon} with $p=1$ provides
$$h(\O)\rho(\O)\le d.$$
Hence, by using \eqref{HP1} (with $p$) and \eqref{Buser} (with $q$), we obtain
\be\label{upperBH}
\F_{p,q}(\O)=\frac{\la_p^{1/p}(\O)}{\la_q^{1/q}(\O)}\leq \frac{\pi_p}{\pi_q} h(\O)\rho(\O)\le \frac{d\pi_p}{\pi_q}.
\ee
By interchanging the role of $p$ and $q$, we get
\be\label{lowerBH}
\F_{p,q}(\O)=\frac{\la_p^{1/p}(\O)}{\la_q^{1/q}(\O)}\ge\frac{\pi_p}{\pi_q }\frac{1}{h(\O)\rho(\O)}\ge\frac{\pi_p}{d\pi_q}.
\ee
Inequalites \eqref{lowerBH} and \eqref{cheegerp} prove that
$$\max\Big\{\frac{q}{p},\frac{\pi_p}{d\pi_q}\Big\}\le\overline{m}_d(p,q),$$
while, using \eqref{Buser} and \eqref{cheegerp}, we have
$$\la_p^{1/p}(\O)\leq \frac{\pi_p}{2}h(\O)\le q\frac{\pi_p}{2}\la_q^{1/q}(\O),$$
which, together with \eqref{upperBH}, implies
$$\overline{M}_d(p,q)\le\pi_p\min\Big\{\frac{q}{2},\frac{d}{\pi_q}\Big\}$$
as required.
\end{proof}

In \cite{Pa15} it is proved that the functional $\F_{2,1}$ admits a minimizing set in the class of bounded convex planar domains. Recently in \cite{Fto}, the author discussed the existence of minimizers for $\F_{2,1}$ in $\A^d_{convex}$ for $d\ge3$, which, up to our knowledge, remains open. In Theorem \ref{main} below we show the existence of a minimizer for $\F_{p,q}$ in the class $\A_{convex}^2$ when $q\le2\le p$. Before proving the theorem we need some preliminary results, that we state in the general case of dimension $d$.

\begin{lemm}\label{BrascoDePhillips}
Let $1\le p\le+\infty$ and $\O\in\A^d_{convex}$. Let $a=(0,\dots,0)$, $b=(0,\dots,\diam(\O))$, and suppose $a,b\in\pa\O$. Then there exists $0<t<\diam(\O)$ such that
$$\la^{1/p}_p(\O)\ge\la^{1/p}_p(\O\cap\{x_d=t\}),$$
where in the right-hand side $\la_p(\O\cap\{x_d=t\})$ is intended in the $d-1$ dimensional sense.
\end{lemm}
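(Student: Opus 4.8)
The plan is to slice $\O$ by hyperplanes orthogonal to the diameter direction and to test the $(d-1)$-dimensional slices against the first eigenfunction of $\O$. Writing $x=(x',x_d)\in\R^{d-1}\times\R$ and setting $\O_t=\{x'\in\R^{d-1}:(x',t)\in\O\}$, I first observe that the position of $a,b$ fixes the admissible range of $t$. Since $|x-a|\le\diam(\O)$ and $|x-b|\le\diam(\O)$ for every $x\in\O$, a direct computation gives $0\le x_d\le\diam(\O)$ on $\O$; as $\O$ is open, this forces $\O\subset\{0<x_d<\diam(\O)\}$, so that $\O_t\neq\emptyset$ only for $t\in(0,\diam(\O))$. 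Hence any slice we select automatically satisfies $0<t<\diam(\O)$, and the range constraint comes for free.

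For the core case $1<p<+\infty$, let $u\in W^{1,p}_0(\O)$ be the positive first eigenfunction, normalized so that $\la_p(\O)=\int_\O|\nabla u|^p\,dx\big/\int_\O|u|^p\,dx$. By the usual Fubini restriction property of Sobolev functions, $u(\cdot,t)\in W^{1,p}_0(\O_t)$ for a.e.\ $t$, so that
\[
\int_{\O_t}|\nabla_{x'}u(x',t)|^p\,dx'\ge\la_p(\O_t)\int_{\O_t}|u(x',t)|^p\,dx'.
\]
Since $|\nabla u|^p\ge|\nabla_{x'}u|^p$, integrating in $t$ and using Fubini yields
\[
\la_p(\O)\int_\O|u|^p\,dx=\int_\O|\nabla u|^p\,dx\ge\int_\R\la_p(\O_t)\,g(t)\,dt,\qquad g(t):=\int_{\O_t}|u(x',t)|^p\,dx',
\]
and therefore $\int_\R\big(\la_p(\O_t)-\la_p(\O)\big)g(t)\,dt\le0$. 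Because $u>0$ in $\O$, the weight $g$ is strictly positive on the whole interval where $\O_t\neq\emptyset$; consequently the integrand cannot be a.e.\ positive there, and the set $\{t:\la_p(\O_t)\le\la_p(\O)\}$ has positive measure. Any $t$ in this set gives $\la^{1/p}_p(\O)\ge\la^{1/p}_p(\O_t)$, which is the claim.

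The two endpoints require small modifications. For $p=+\infty$ the inequality reads $\rho(\O)^{-1}\ge\rho(\O_t)^{-1}$, i.e.\ we must find a slice with $(d-1)$-dimensional inradius at least $\rho(\O)$: it suffices to take $t$ equal to the height of the center of a ball of maximal radius $\rho(\O)$ inscribed in $\O$, whose equatorial section is a $(d-1)$-ball of radius $\rho(\O)$ contained in $\O_t$. For $p=1$ the eigenfunction is replaced by a minimizing sequence $u_n\in C^1_c(\O)$ for $h(\O)$; slicing as above and letting $n\to+\infty$ gives only the averaged bound $h(\O)\ge\mathrm{ess\,inf}_t\,h(\O_t)$, from which a genuine $t$ must still be extracted.

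The delicate point, and the place where convexity enters decisively, is precisely this extraction of an honest value of $t$ when only the averaged, essential-infimum bound is at hand — that is, for $p=1$, since for $1<p<+\infty$ the positive-measure argument above already produces such a $t$ with no further input. I would handle $p=1$ by showing that, for convex $\O$, the slices $\O_t$ vary continuously in the Hausdorff sense on the open interval where they are nonempty, whence $t\mapsto h(\O_t)$ is continuous there, and that $h(\O_t)\to+\infty$ as $t$ approaches either endpoint, the slices collapsing to a point. These facts confine the infimum of $t\mapsto h(\O_t)$ to an interior minimizer $t^\ast\in(0,\diam(\O))$, and the averaged inequality then forces $h(\O_{t^\ast})\le h(\O)$. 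Proving the Hausdorff continuity of the slices together with the blow-up at the poles is the main technical obstacle; by comparison, the a.e.\ Fubini restriction $u(\cdot,t)\in W^{1,p}_0(\O_t)$ used in the core case is routine but deserves to be spelled out.
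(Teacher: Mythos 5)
Your treatment of $1<p<+\infty$ is correct but takes a genuinely different route from the paper's. The paper never uses eigenfunctions: it picks an $\eps$-near minimizer $\phi\in C^1_c(\O)$ with $\|\phi\|_p=1$, slices by Fubini exactly as you do (restrictions of $C^1_c(\O)$ functions are automatically admissible on the slices, so no Sobolev trace discussion is needed), and bounds the resulting weighted average of $\tau\mapsto\la_p(\O\cap\{x_d=\tau\})$ from below by its infimum over $\tau$; the key point is that this infimum is \emph{attained} at an interior $t$, because for bounded convex sets the slices vary continuously in the Hausdorff distance, $\la_p$ is continuous under Hausdorff convergence of convex sets (a known fact the paper quotes from \cite{bubu05} and \cite{He}), and the slice eigenvalues blow up as $\tau\to 0^+$ or $\tau\to\diam(\O)^-$ since the slices shrink to the points $a,b$. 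Your eigenfunction-plus-positivity device buys a real simplification in this range of $p$: it needs no continuity of $t\mapsto\la_p(\O_t)$ whatsoever, only existence and positivity of a first eigenfunction, which hold since a convex set of finite measure is bounded and connected. One cosmetic repair: as written, the inequality $\int(\la_p(\O_t)-\la_p(\O))g(t)\,dt\le 0$ and the conclusion that $\{t:\la_p(\O_t)\le\la_p(\O)\}$ has positive measure presuppose measurability of $t\mapsto\la_p(\O_t)$, which you have deliberately refused continuity for; argue instead by contradiction, integrating the strict pointwise inequality $\int_{\O_t}|\nabla_{x'}u|^p\,dx'>\la_p(\O)\,g(t)$ (both sides measurable by Fubini) to reach $\int_\O|\nabla u|^p\,dx>\la_p(\O)\int_\O|u|^p\,dx$. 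Your $p=+\infty$ case matches the paper's, which dismisses it as trivial.

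The soft spot is the one you flag yourself: $p=1$. There you revert to precisely the paper's scheme — Hausdorff continuity of the slices, continuity of $t\mapsto h(\O_t)$, blow-up at the poles — but leave these facts unproven and call them the main technical obstacle. They are not a genuine obstacle: they are the standard continuity properties of $\la_p$ (including the Cheeger constant, via $\la_1=h$ in the paper's convention \eqref{laextrem}) on bounded convex sets, which the paper simply cites rather than proves. Moreover, once you grant them, the paper's single test-function argument runs verbatim for every $1\le p<+\infty$, so the case distinction between $p=1$ and $p>1$ becomes unnecessary. In short: no wrong step, a partly novel and self-contained argument for $1<p\le+\infty$, but the $p=1$ case as written is an outline resting on facts you should either cite or prove.
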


\begin{proof}
The case when $p=+\infty$ is trivial and hence we can suppose $1\le p<\infty$. We notice that there exists $t\in (0,\diam(\O))$ such that
$$\la_p(\O\cap\{x_d=t\})=\inf_{\tau\in (0,\diam(\O))}\la_p(\O\cap\{x_d=\tau\}).$$
Indeed the map
$$
\tau\mapsto\{\O\cap\{x_d=\tau\}\}\subset\R^d,
$$
is continuous with respect to the Hausdorff distance, and thus, thanks to the well-known continuity properties for $\la_p$ with respect to Hausdorff metrics on the class of bounded convex sets (see \cite{bubu05} and \cite{He}, for details about this fact), the map
$$
\tau\mapsto\la_p(\O\cap\{x_d=\tau\}),
$$
is continuous as well.
Moreover both $\O\cap\{x_d=0\}$ and $\O\cap\{x_d=\diam(\O)\}$ are empty, so that
$$\lim_{t\to 0^+}\la_p(\O\cap\{x_d=\tau\})=\lim_{\tau\to \diam(\O)^-}\la_p(\O\cap\{x_d=\tau\})=+\infty.$$
Now, let $\eps>0$ and $\phi\in C^1_c(\O)$ be such that $\|\phi\|_p=1$ and $\eps+\la_p(\O) \ge \|\nabla\phi\|_p^p$. Then
\[\begin{split}
\eps+\la_p(\O)&\ge\int_\O|\nabla\phi|^pdx=\int_0^{\diam(\O)}\left(\int_{\O\cap\{x_d=\tau\}}|\nabla\phi|^pdx'\right)d\tau\\
&\ge\int_0^{\diam(\O)}\left(\int_{\O\cap \{x_d=\tau\}}|\nabla_{x'}\phi|^pdx'\right)d\tau\\
&\ge \int_0^{\diam(\O)}\left(\la_p(\O\cap \{x_d=\tau\})\int_{\O\cap \{x_d=\tau\}}|\phi|^pdx'\right)d\tau\\
&\ge \la_p(\O\cap\{x_d=t\}),
\end{split}
\]
which, by the arbitrariness of $\eps$, implies the thesis.

\end{proof}

\begin{lemm}\label{sezioni}
Let $1\le p\le+\infty$ and $(\O_n)\subset\A^d_{convex}$ with $|\O_n|=1$ for every $n\in\N$. Suppose that
\begin{itemize}
\item $(0,\dots,0), (0,\dots,\diam(\O_n))\in\pa\O_n,$
\item $\inf_{n\in\N}\diam (\O_n)>0.$
\end{itemize}
Then, there exists $c>0$ such that $$\inf_{n\in\N}\inf_{\tau\in (0,\diam(\O_n))}\la^{1/p}_p(\O_n\cap\{x_d=\tau\})\ge c.$$
\end{lemm}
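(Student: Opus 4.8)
The plan is to reduce the statement to a uniform upper bound on the \emph{inradius} of the slices and then to quote the Hersch--Protter inequality \eqref{HP1} in dimension $d-1$. Set $\delta:=\inf_{n}\diam(\O_n)>0$ and $D_n:=\diam(\O_n)$. Since $(0,\dots,0)$ and $(0,\dots,D_n)$ lie on $\pa\O_n$ and realize the diameter, the orthogonal projection of $\O_n$ onto the $x_d$-axis is exactly $[0,D_n]$; moreover, by convexity the open segment joining these two boundary points is contained in $\O_n$, so for every $\tau\in(0,D_n)$ the slice $S_{n,\tau}:=\O_n\cap\{x_d=\tau\}$ is a nonempty, bounded, open, convex, $(d-1)$-dimensional subset of the hyperplane $\{x_d=\tau\}\cong\R^{d-1}$. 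This makes the quantities $\la_p^{1/p}(S_{n,\tau})$ and $\rho(S_{n,\tau})$ meaningful in the $(d-1)$-dimensional sense.

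The heart of the argument is a \emph{bicone volume estimate}. Fixing $n$ and $\tau\in(0,D_n)$, I form the two cones given by the convex hulls of $S_{n,\tau}$ with the apexes $(0,\dots,0)$ and $(0,\dots,D_n)$. Both are contained in $\O_n$ by convexity, they lie on opposite sides of the hyperplane $\{x_d=\tau\}$, and their $d$-dimensional volumes equal $\tfrac1d\HH^{d-1}(S_{n,\tau})\,\tau$ and $\tfrac1d\HH^{d-1}(S_{n,\tau})(D_n-\tau)$. Adding them and using $|\O_n|=1$ gives
$$\frac1d\,\HH^{d-1}(S_{n,\tau})\,D_n\le 1,\qquad\text{hence}\qquad \HH^{d-1}(S_{n,\tau})\le\frac{d}{D_n}\le\frac{d}{\delta}.$$
Since $S_{n,\tau}$ is convex it contains a $(d-1)$-ball of radius $\rho(S_{n,\tau})$, so $\omega_{d-1}\,\rho(S_{n,\tau})^{d-1}\le\HH^{d-1}(S_{n,\tau})\le d/\delta$, yielding the uniform inradius bound $\rho(S_{n,\tau})\le R_0:=\big(d/(\omega_{d-1}\delta)\big)^{1/(d-1)}$, where $R_0$ depends only on $d$ and $\delta$. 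I expect this to be the main obstacle: the naive idea of bounding the \emph{diameter} of the slices fails when $d\ge3$ (a thin convex slice may have tiny $(d-1)$-volume yet large diameter), so one must instead control the inradius, which is precisely the quantity captured by the volume bound above.

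To conclude I convert the inradius bound into the lower bound on $\la_p^{1/p}$. For $1\le p<+\infty$, the Hersch--Protter inequality \eqref{HP1} applied to the convex set $S_{n,\tau}$ gives
$$\la_p^{1/p}(S_{n,\tau})>\frac{\pi_p}{2\,\rho(S_{n,\tau})}\ge\frac{\pi_p}{2R_0},$$
while for $p=+\infty$ one uses directly $\la_\infty^{1/\infty}(S_{n,\tau})=\rho(S_{n,\tau})^{-1}\ge R_0^{-1}$ from \eqref{laextrem}. In either case the bound is independent of $n$ and $\tau$, so setting $c:=\pi_p/(2R_0)$ (respectively $c:=R_0^{-1}$) proves $\inf_{n}\inf_{\tau\in(0,\diam(\O_n))}\la_p^{1/p}(\O_n\cap\{x_d=\tau\})\ge c>0$, as required.
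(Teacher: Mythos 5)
Your proof is correct and takes essentially the same route as the paper: a cone volume estimate inside $\O_n$ gives a uniform upper bound on the inradius of every slice, and the Hersch--Protter inequality \eqref{HP1} (with \eqref{laextrem} for $p=+\infty$) turns this into the desired uniform lower bound on $\la_p^{1/p}$; the paper simply uses one cone of height at least $\diam(\O_n)/2$ over the inscribed $(d-1)$-ball, instead of your bicone estimate through $\HH^{d-1}$, which only changes the constant. One small inaccuracy: the open segment joining two boundary points of an open convex set need not lie in the set (it can lie entirely on the boundary), but nonemptiness of the slices follows anyway by joining an interior point of $\O_n$ to each of the two apexes, so this does not affect the argument.
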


\begin{proof}
For any $n\in\N$ and any $t\in (0,\diam(\O_n))$, we denote 
$$\omega_{n}(t)=\O_n\cap\{x_d=t\}\in \A_{convex}^{d-1}.$$
By \eqref{HP1} we have
$$
\la^{1/	p}_p(\omega_n(t))> \frac{\pi_p}{2\rho(\omega_n(t))}.
$$
Being $\O_n$ convex, the cone set having basis $\omega_n(t)$ and height $t\ge \diam(\O_n)/2$ is contained in $\O_n$. Hence we have
$$
\frac{\rho^{d-1}(\omega_n(t))|B^{d-1}_1|t}{d}\le |\O_n|=1.
$$
In particular 
$$
\rho(\omega_n(t))\le \left(\frac{2d}{|B^{d-1}_1|\diam(\O_n)}\right)^{1/(d-1)}.
$$
Since $\inf_{n\in\N}\diam(\O_n)>0$, we obtain
$$\rho(\omega_n(t))\le\left(\frac{2d}{|B^{d-1}_1|\inf_n\diam(\O_n)}\right)^{1/(d-1)},$$
and the thesis easily follows.
\end{proof}

\begin{prop}\label{Ftouhi}
Let $1\le q<p\le+\infty$ and $(\O_n)\subset\A^d_{convex}$ with $|\O_n|=1$ for every $n\in\N$. If $\diam(\O_n)\to+\infty$, then
$$\overline{m}_{d-1}(p,q)
\le\liminf_{n\to+\infty}\F_{p,q}(\O_n)\le\limsup_{n\to+\infty}\F_{p,q}(\O_n)\le\overline{M}_{d-1}(p,q).$$
\end{prop}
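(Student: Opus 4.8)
The plan is to show that, for a convex set that is very long in one direction, each eigenvalue is essentially governed by the fattest $(d-1)$-dimensional section orthogonal to the diameter, and then to read off the bounds for $\F_{p,q}$ from the fact that \emph{every} such section $\omega\in\A^{d-1}_{convex}$ satisfies $\overline m_{d-1}(p,q)\le\F_{p,q}(\omega)\le\overline M_{d-1}(p,q)$. After a rotation and translation (which leave all the $\la_r$ invariant) I may assume $(0,\dots,0),(0,\dots,\diam(\O_n))\in\pa\O_n$, so that Lemmas \ref{BrascoDePhillips} and \ref{sezioni} apply. Writing $\omega_n(\tau)=\O_n\cap\{x_d=\tau\}$ and $I_r^{(n)}:=\inf_{\tau}\la_r^{1/r}(\omega_n(\tau))$, the heart of the matter is the two-sided estimate, for each fixed $r\in\{p,q\}$,
\be\label{sandwichplan}
I_r^{(n)}\le\la_r^{1/r}(\O_n)\le(1+o(1))\,I_r^{(n)}\qquad(n\to\infty).
\ee

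The left inequality in \eqref{sandwichplan} is exactly Lemma \ref{BrascoDePhillips}; by the continuity and the blow-up at the endpoints established in its proof, the infimum $I_r^{(n)}$ is attained at some interior $t_n^r$. The right inequality is the main point, and I would prove it by inscribing a long cylinder near $\omega_n(t_n^r)$. Choosing the endpoint, say $b=(0,\dots,\diam(\O_n))$, farther from $t_n^r$, convexity of $\O_n$ gives the dilation inclusion $\omega_n(\tau)\supseteq\frac{\diam(\O_n)-\tau}{\diam(\O_n)-t_n^r}\,\omega_n(t_n^r)$ (about the projection of $b$) for $\tau\in(t_n^r,\diam(\O_n))$. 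Hence, with $\delta_n=\diam(\O_n)^{1/2}$, $J_n=[t_n^r,t_n^r+\delta_n]$ and $W_n=\bigcap_{\tau\in J_n}\omega_n(\tau)$, one gets $W_n\times J_n\subseteq\O_n$ and $W_n\supseteq(1-\eps_n)\,\omega_n(t_n^r)$ with $\eps_n\le 2\delta_n/\diam(\O_n)\to0$, so by \eqref{scala} $\la_r(W_n)\le(1-\eps_n)^{-r}\la_r(\omega_n(t_n^r))$. Monotonicity \eqref{decla} together with the upper estimate of Lemma \ref{cilindri} (with $\omega=W_n$, $L=\delta_n$) then yields
\[
\la_r^{1/r}(\O_n)\le\la_r^{1/r}(W_n\times J_n)\le(1+o(1))\,\la_r^{1/r}(W_n)\le(1+o(1))\,\la_r^{1/r}(\omega_n(t_n^r))=(1+o(1))\,I_r^{(n)}.
\]
Both correction factors are controlled uniformly because, when $\diam(\O_n)\to\infty$, the cone estimate in the proof of Lemma \ref{sezioni} forces $\sup_\tau\rho(\omega_n(\tau))\le C\,\diam(\O_n)^{-1/(d-1)}\to0$, whence \eqref{HP1} gives $I_r^{(n)}\to+\infty$; this makes $\eps_n\to0$ and, since also $\delta_n\to\infty$ and $\la_r(W_n)\ge(I_r^{(n)})^r$, the slab-versus-product correction of Lemma \ref{cilindri} tends to $1$. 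The cases $r=\infty$ are only simpler, since there $\la_r^{1/r}=\rho^{-1}$ and $\rho(W_n\times J_n)=\rho(W_n)$ once $\delta_n>2\rho(W_n)$, while the left inequality is the trivial case of Lemma \ref{BrascoDePhillips}.

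Granting \eqref{sandwichplan} for $r=p$ and $r=q$, dividing the two estimates shows that $\F_{p,q}(\O_n)$ differs from $I_p^{(n)}/I_q^{(n)}$ by a factor tending to $1$. Since $I_p^{(n)}=\la_p^{1/p}(\omega_n(t_n^p))$ and $I_q^{(n)}=\la_q^{1/q}(\omega_n(t_n^q))$ are attained, the elementary comparisons $I_q^{(n)}\le\la_q^{1/q}(\omega_n(t_n^p))$ and $I_p^{(n)}\le\la_p^{1/p}(\omega_n(t_n^q))$ sandwich the ratio,
\[
\overline m_{d-1}(p,q)\le\F_{p,q}(\omega_n(t_n^p))\le\frac{I_p^{(n)}}{I_q^{(n)}}\le\F_{p,q}(\omega_n(t_n^q))\le\overline M_{d-1}(p,q),
\]
every section $\omega_n(\tau)$ lying in $\A^{d-1}_{convex}$. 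Taking $\liminf$ and $\limsup$ and absorbing the factor $1+o(1)$ (harmless, as the middle term is bounded) gives the asserted inequalities.

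I expect the main obstacle to be the \emph{uniform} smallness of the two error terms in the upper bound of \eqref{sandwichplan}: the tapering error $\eps_n$ and the slab-versus-product error of Lemma \ref{cilindri} must both vanish along the \emph{minimizing} sections $t_n^r$ (whose location within $(0,\diam(\O_n))$ is not controlled a priori). This forces one to use a quantitative form of Lemma \ref{cilindri} rather than the bare limit \eqref{limitecilindri}, and to combine it with the uniform blow-up $I_r^{(n)}\to+\infty$ provided by the inradius bound behind Lemma \ref{sezioni}. Everything else reduces to bookkeeping with \eqref{scala} and \eqref{decla}.
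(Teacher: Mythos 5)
Your proposal is correct, and it is assembled from the same three ingredients as the paper's proof: Lemma \ref{BrascoDePhillips} to bound an eigenvalue from below by a section, an inscribed cylinder produced by the cone construction of convexity together with the quantitative upper bounds of Lemma \ref{cilindri}, and Lemma \ref{sezioni} (with the volume/cone estimate behind it) to make the error terms vanish uniformly. The organization, however, is genuinely different and worth comparing. The paper never proves your per-exponent sandwich $I_r^{(n)}\le\la_r^{1/r}(\O_n)\le(1+o(1))I_r^{(n)}$: for the $\liminf$ it fixes the \emph{single} section $\omega_n$ given by Lemma \ref{BrascoDePhillips} applied to $p$, inscribes the cylinder $U^n_\alpha=\alpha\omega_n\times(\alpha t_n,t_n)$ with a fixed shrinking factor $\alpha<1$ and length proportional to $t_n$, compares $\la_q(\O_n)\le\la_q(U^n_\alpha)$, deduces $\F_{p,q}(\O_n)\ge(1+o(1))\,\alpha\,\F_{p,q}(\omega_n)\ge(1+o(1))\,\alpha\,\overline{m}_{d-1}(p,q)$, and only at the end lets $\alpha\to1^-$; the $\limsup$ is the symmetric argument with $p$ and $q$ exchanged. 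You instead identify each eigenvalue asymptotically with the minimal section eigenvalue, using two possibly different minimizing sections $t_n^p,t_n^q$ and reconciling them through the infimum inequalities; your diagonal choice $\delta_n=\sqrt{\diam(\O_n)}$, $\eps_n\le 2/\sqrt{\diam(\O_n)}$ kills the tapering and the slab-versus-product errors simultaneously and removes the auxiliary parameter $\alpha$. What your route buys is a cleaner and slightly stronger intermediate statement, $\la_r^{1/r}(\O_n)\sim\inf_\tau\la_r^{1/r}(\omega_n(\tau))$, of independent interest; what it costs is the need for attainment of that infimum (which you correctly extract from the continuity and endpoint blow-up in the proof of Lemma \ref{BrascoDePhillips}), which the paper's one-section-per-bound version sidesteps. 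One step you should make explicit: the inclusion $W_n\supseteq(1-\eps_n)\,\omega_n(t_n^r)$ does not follow from the dilation inclusions alone, since the dilates $s\,\omega_n(t_n^r)$ need not be nested when the origin of $\R^{d-1}$ lies outside the section; it does follow from one more application of convexity, because the axis point $(0,\tau)$ belongs to $\overline{\O}_n$, so $\lambda\,(s(\tau)x,\tau)+(1-\lambda)(0,\tau)\in\O_n$ for every $\lambda\in(0,1]$. This is a remark on exposition rather than a gap: the paper's own claim $U^n_\alpha\subset\O_n$ rests on exactly the same observation.
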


\begin{proof}
Let $a_n=(0,\dots,0)$ and $b_n=(0,\dots, \diam(\O_n))$. Being the functional $\F_{p,q}$ rotations and translations invariant we can suppose, without loss of generality, that $a_n,b_n\in \pa\O_n$. By applying Lemma \ref{BrascoDePhillips} there exists $t_n\in (0,\diam(\O_n))$ such that
$$\la^{1/p}_p(\O_n)\ge \la^{1/p}_p(\omega_n),$$
where $\omega_n=\O_n\cap \{x_d=t_n\}$.
Moreover, we can also suppose $t_n\ge\diam(\O_n)/2$. Let $\alpha\in(0,1)$, and define $U^n_\alpha$ to be the cylinder with basis $\alpha\omega_n$ and height $(1-\alpha)t_n$. More precisely we consider
$$U^n_\alpha:=\{(x,y)\ :\ x\in\alpha\omega_n,\ y\in(\alpha t_n,t_n)\}.$$
Then, by the convexity of $\O_n$ we have $U_\alpha^n\subset\O_n$ so that
$$\la_q(U_\alpha^n)\ge\la_q(\O_n).$$
Since 
$$\la_q(U_\alpha^n)=\alpha^{-q}\la_q\left(\omega_n\times (0, \frac{(1-\alpha)}{\alpha}t_n)\right)$$
we obtain
\[\begin{split}
\F_{p,q}(\O_n)&\ge\frac{\la^{1/p}_p(\O_n)}{\la^{1/q}_q(U_\alpha^n)}\ge\frac{\la^{1/p}_p(\omega_n)}{\la_q^{1/q}(\omega_n)}\left(\frac{\la_q(\omega_n)}{\alpha^{-1}\la_q(\omega_n\times (0,1-\alpha)\alpha^{-1}t_n)}\right)^{1/q}\\
&\ge\alpha\overline{m}_{d-1}(p,q)\left(\frac{\la_q(\omega_n)}{\la_q(\omega_n\times(0,1-\alpha)\alpha^{-1}t_n)}\right)^{1/q}.
\end{split}\]
Now, suppose that $q\le 2$. By Lemma \ref{cilindri} we have
$$
\la_q(\omega_n\times (0,1-\alpha)\alpha^{-1}t_n)\le\la_q(\omega_n)+\left(\frac{\alpha\pi_q}{(1-\alpha)t_n}\right)^q.
$$
Since $\diam(\O_n)\to+\infty$, we can assume $\inf_n\diam(\O_n)>0$; by applying Lemma \ref{sezioni} we have that $\la^{1/q}_q(\omega_n) t_n\ge ct_n $ for some constant $c>0$, that implies
$$\lim_{n\to+\infty}\la^{1/q}_q(\omega_n) t_n=+\infty.$$
Then
$$\lim_{n\to+\infty}\frac{\la_q(\omega_n)}{\la_q(\omega_n)+\displaystyle\frac{\alpha^q\pi^q_q}{(1-\alpha)^qt_n^q}}=1.$$
This allows to conclude that
$$\liminf_{n\to+\infty}\F_{p,q}(\O_n)\ge\alpha\overline{m}_{d-1}(p,q)$$
and finally, letting $\alpha\to 1^-$, we conclude. The case when $q\ge2$ is similar. Indeed, \eqref{cilindristime} ensures that
$$
\la_q(\omega_n\times (0,1-\alpha)\alpha^{-1}t_n)\le\la_q(\omega_n)\left(1+\frac{\alpha^2\pi^2_q}{(1-\alpha)^2\la^{2/q}_q(\omega_n)t_n^2}\right)^{q/2},
$$
and again Lemma \ref{sezioni} applies.

Finally, if we choose $\omega_n$ to be such that $\la_q(\omega_n)\le \la_q(\O_n)$, and we use the fact that $\la^{1/p}_p(\O_n)\le \la_p^{1/p}(U^n_\alpha)$ we obtain:
$$\F_{p,q}(\O_n)\le \frac{\la^{1/p}_p(U^n_\alpha)}{\la^{1/q}_q(\Omega_n)}\le \frac{\overline{M}_{d-1}(p,q)}{\alpha}\left(\frac{\la_p(\omega_n\times (0, \alpha^{-1}(1-\alpha)t_n)}{\la_p(\omega_n)}\right)^{1/p}.$$
By the same argument as above, passing to the limit, as $n\to\infty$, we have
$$\limsup_{n\to\infty}\F_{p,q}(\O_n)\le\alpha^{-1}\overline{M}_{d-1}(p,q)$$
being $\omega_n$ an open convex set of $\R^{d-1}$. Finally, letting $\alpha\to1^-$, we conclude that 
$$\limsup_{n\to\infty}\F_{p,q}(\O_n)\le\overline{M}_{d-1}(p,q)$$
as required.
\end{proof}

\begin{theo}\label{existence}
Let $1\le q<p\le+\infty$. If $\overline{m}_{d}(p,q)<\overline{m}_{d-1}(p,q)$, then there exists $\O^d_\star\in \A_{convex}^d$ such that
$$\overline{m}_d(p,q)=\F_{p,q}(\O^d_\star).$$
\end{theo}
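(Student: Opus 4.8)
The plan is to run the direct method of the calculus of variations, using the strict inequality hypothesis precisely to stop a minimizing sequence from degenerating into a lower–dimensional object at infinity. I would start from a minimizing sequence $(\O_n)\subset\A^d_{convex}$ for $\overline{m}_d(p,q)$, so that $\F_{p,q}(\O_n)\to\overline{m}_d(p,q)$. Since $\F_{p,q}$ is invariant under dilations, rotations and translations, I may normalize $|\O_n|=1$ and, as in Lemma~\ref{BrascoDePhillips}, arrange the diameter of $\O_n$ along the $x_d$–axis, with endpoints $(0,\dots,0)$ and $(0,\dots,\diam(\O_n))$ lying on $\pa\O_n$.

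The crucial step is to show that $\sup_n\diam(\O_n)<+\infty$. If this failed, along a subsequence $\diam(\O_n)\to+\infty$, and Proposition~\ref{Ftouhi} would force $\liminf_n\F_{p,q}(\O_n)\ge\overline{m}_{d-1}(p,q)$; but the left-hand side equals $\overline{m}_d(p,q)$, contradicting the standing assumption $\overline{m}_d(p,q)<\overline{m}_{d-1}(p,q)$. Hence $\diam(\O_n)\le D$ for some $D>0$. Moreover, since $\O_n$ is convex with $|\O_n|=1$ and is contained in a ball of radius $\diam(\O_n)$, the estimate $1=|\O_n|\le\omega_d\diam(\O_n)^d$ gives $\inf_n\diam(\O_n)>0$ as well, so the sequence is uniformly nondegenerate.

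Next I would extract a convergent subsequence and identify an admissible limit. After a translation placing the origin in each $\O_n$, all the sets $\overline{\O_n}$ lie in the fixed ball $B_D$, so by the Blaschke selection theorem a subsequence converges in the Hausdorff distance to a compact convex set $K$. To check that $K$ is full-dimensional, I would use that $\O_n\subset K+\eps_n B_1$ with $\eps_n\to0$, whence $1=|\O_n|\le|K+\eps_n B_1|\to|K|$; thus $|K|>0$ and $\O:=\mathrm{int}(K)\in\A^d_{convex}$ is a genuine competitor with $|\O|=1$. Finally, by the continuity of $\lambda_p$ on bounded convex sets with respect to the Hausdorff metric (the very property invoked in Lemma~\ref{BrascoDePhillips}, see \cite{bubu05,He}), together with the continuity of the inradius in the case $p=+\infty$ and of the Cheeger constant in the case $q=1$ under Hausdorff convergence of convex bodies, I obtain $\lambda_p(\O_n)\to\lambda_p(\O)$ and $\lambda_q(\O_n)\to\lambda_q(\O)$, hence $\F_{p,q}(\O)=\lim_n\F_{p,q}(\O_n)=\overline{m}_d(p,q)$. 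Since $\O$ is admissible, $\O^d_\star:=\O$ realizes the infimum.

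I expect the main obstacle to be exactly the exclusion of diameter blow-up: this is the only place where compactness can fail, and it is defeated solely by the strict gap $\overline{m}_d(p,q)<\overline{m}_{d-1}(p,q)$ through Proposition~\ref{Ftouhi}. The remaining ingredients — Blaschke compactness, nonvanishing of the limiting volume, and continuity of the spectral quantities on the class of convex bodies — are comparatively standard, the only mild care being needed at the endpoints $q=1$ and $p=+\infty$, where $\lambda_q^{1/q}$ and $\lambda_p^{1/p}$ must be read as $h$ and $\rho^{-1}$ according to \eqref{laextrem}.
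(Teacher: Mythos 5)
Your proposal is correct and takes essentially the same route as the paper's proof: a unit-volume minimizing sequence, the diameter bound forced by Proposition~\ref{Ftouhi} combined with the strict gap $\overline{m}_d(p,q)<\overline{m}_{d-1}(p,q)$, Hausdorff (Blaschke) compactness, and the continuity of $\lambda_p$ on bounded convex sets under Hausdorff convergence. The only difference is that you make explicit two points the paper leaves implicit — the contradiction argument behind $\sup_n\diam(\O_n)<+\infty$ and the nondegeneracy of the Hausdorff limit via $|\O_n|=1$ — which is a welcome, not a divergent, addition.
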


\begin{proof}
Let $(\O_n)$ be such that $\F_{p,q}(\O_n)\to \overline{m}_d(p,q)$ with $|\O_n|=1$ for every $n\in\N$. Then, by Proposition \ref{Ftouhi}, we have
$$\sup_n\diam(\O_n)<+\infty.$$
Hence, up to translations, the whole sequence $(\O_n)$ is contained in a compact set and we can extract a subsequence $(\O_{n_k})$ which converges in the Hausdorff distance to some $\O^d_{\star}$. Using the continuity properties for $\la_p$ with respect to Hausdorff metrics on the class of bounded convex sets, we have
$$\overline{m}_d(p,q)=\lim_{n\to\infty}\F_{p,q}(\O_n)=\F_{p,q}(\O^d_{\star})$$
as required.
\end{proof}

\begin{lemm}\label{stimequadrati}
Let $1\le p<\infty$ and $Q=(0,1)^d$ be the unitary cube of $\R^d$. Then
\[\begin{split}
d^{1/p}\pi_p\le\la^{1/p}_p(Q)< d^{1/2}\pi_p\qquad\text{for every }p>2;\\
d^{1/2}\pi_p< \la^{1/p}_p(Q)\le d^{1/p}\pi_p\qquad\text{for every }p<2.\\
\end{split}\]
\end{lemm}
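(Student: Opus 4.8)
The plan is to reduce everything to the one-dimensional constant $\pi_p^p=\lambda_p\big((0,1)\big)$ (see \eqref{pi}) by exploiting the product structure of $Q$, using the very same two mechanisms as in Lemma \ref{cilindri}: the super/sub-additivity of $t\mapsto t^{p/2}$ and a product test function. Writing $\nabla\phi=(\partial_1\phi,\dots,\partial_d\phi)$, so that $|\nabla\phi|^p=\big(\sum_i|\partial_i\phi|^2\big)^{p/2}$, the whole argument rests on comparing the $\ell^2$-expression $\big(\sum_i a_i^2\big)^{p/2}$ with the $\ell^p$-sum $\sum_i a_i^p$ for $a_i=|\partial_i\phi|\ge0$. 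Applying Jensen's inequality to the convex (resp. concave) map $t\mapsto t^{p/2}$, together with super/sub-additivity, one has the pointwise bounds
$$\sum_i a_i^p \le \Big(\sum_i a_i^2\Big)^{p/2}\le d^{\,p/2-1}\sum_i a_i^p\quad(p\ge2),\qquad d^{\,p/2-1}\sum_i a_i^p\le\Big(\sum_i a_i^2\Big)^{p/2}\le\sum_i a_i^p\quad(p\le2),$$
the power-mean estimates being equalities if and only if all the $a_i$ coincide.

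For the lower bounds I would take an arbitrary $\phi\in C^1_c(Q)$ and, for each index $i$, apply the one-dimensional Poincar\'e inequality \eqref{pi} on almost every segment parallel to the $x_i$-axis (each such slice of $\phi$ lies in $C^1_c(0,1)$), obtaining by Fubini $\int_Q|\partial_i\phi|^p\,dx\ge\pi_p^p\int_Q|\phi|^p\,dx$ and hence $\sum_i\int_Q|\partial_i\phi|^p\,dx\ge d\pi_p^p\int_Q|\phi|^p\,dx$. Combining this with the left-hand pointwise bounds above gives $\int_Q|\nabla\phi|^p\ge d\pi_p^p\int_Q|\phi|^p$ when $p\ge2$ (whence $\lambda_p^{1/p}(Q)\ge d^{1/p}\pi_p$) and $\int_Q|\nabla\phi|^p\ge d^{p/2}\pi_p^p\int_Q|\phi|^p$ when $p\le2$ (whence $\lambda_p^{1/p}(Q)\ge d^{1/2}\pi_p$).

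For the upper bounds I would test $\lambda_p(Q)$ with the product $\phi(x)=\prod_{i=1}^d u(x_i)$, where $u\in W^{1,p}_0(0,1)$ is the first eigenfunction normalized by $\|u\|_{L^p(0,1)}=1$ and $\|u'\|_{L^p(0,1)}^p=\pi_p^p$. Then $\|\phi\|_{L^p(Q)}=1$ and $\int_Q|\partial_i\phi|^p\,dx=\pi_p^p$ for every $i$, so the right-hand pointwise bounds yield $\int_Q|\nabla\phi|^p\le d^{p/2}\pi_p^p$ for $p\ge2$ and $\int_Q|\nabla\phi|^p\le d\pi_p^p$ for $p\le2$, giving the matching upper estimates for $\lambda_p^{1/p}(Q)$; alternatively, all four non-strict bounds follow by induction on $d$ from Lemma \ref{cilindri} taken with $L=1$. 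The strict upper bound for $p>2$ is then easy: since $d\ge2$, the product function satisfies $|\partial_1\phi|\ne|\partial_2\phi|$ on a set of positive measure, so the power-mean estimate is strict after integration and $\lambda_p(Q)<d^{p/2}\pi_p^p$.

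The hard part will be the strict lower bound for $p<2$, which I expect to be the main obstacle. Here I would argue by contradiction: if $\lambda_p(Q)=d^{p/2}\pi_p^p$, a minimizer $u_*$ (positive in $Q$, by standard facts on the first $p$-Laplacian eigenfunction, see \cite{Lind93}) would have to realize equality \emph{simultaneously} in the power-mean estimate and in the slice-wise Poincar\'e inequality in every direction. Equality in the latter, combined with the simplicity of the first one-dimensional eigenfunction, forces $u_*(x)=c_i(\hat x_i)\,u(x_i)$ for each $i$ (with $\hat x_i$ the remaining variables); a short rigidity argument then shows that ``separately proportional in each variable'' implies $u_*=C\prod_i u(x_i)$. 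But for such a product $|\partial_1 u_*|\ne|\partial_2 u_*|$, contradicting the equality case of the power-mean estimate. Making this rigorous — the measurability of the slices, the ``separate proportionality $\Rightarrow$ product'' step, and the positivity/simplicity facts — is where most of the care is needed. Finally, the endpoint $p=1$ (excluded from the eigenfunction arguments) would be recovered from the non-strict bounds by an approximation with near-optimal tent functions together with the asymptotics \eqref{asyla}.
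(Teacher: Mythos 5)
Your proposal is correct and follows the same skeleton as the paper's proof: the four non-strict bounds come from the product structure of the cube (the paper gets them by applying Lemma \ref{cilindri} $d$ times with $L=1$; your slice-wise Poincar\'e argument combined with the power-mean bounds is an equivalent route, and indeed you note the Lemma \ref{cilindri} alternative yourself), the strict upper bound for $p>2$ comes from testing $\la_p(Q)$ with the product of one-dimensional eigenfunctions and the strict convexity of $t\mapsto t^{p/2}$, and the strict lower bound for $p<2$ comes from the positive first eigenfunction of $Q$ and the strict concavity of the same map. The one place where you genuinely diverge is precisely the step you flag as the hard part. For $p<2$ the paper simply asserts, for the eigenfunction $\tilde\phi$ of $\la_p(Q)$, the pointwise strict inequality
$$|\nabla\tilde\phi(x)|^p> d^{p/2-1}\sum_i\left|\frac{\partial\tilde\phi(x)}{\partial x_i}\right|^{p},$$
and integrates it; as written this is not quite rigorous (equality holds wherever all the partial derivatives coincide in modulus, e.g.\ at critical points of $\tilde\phi$), and what is really needed is that the partials differ on a set of positive measure. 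Your contradiction argument supplies exactly this justification: equality in the integrated inequality forces equality a.e.\ in the power-mean bound \emph{and} equality in every sliced Poincar\'e inequality; the latter, via simplicity of the one-dimensional eigenvalue and your ``separately proportional $\Rightarrow$ product'' rigidity step, identifies the minimizer with $C\prod_i u(x_i)$, whose partials are not a.e.\ equal in modulus, a contradiction. This is essentially the argument the authors used in an earlier version (left commented out in the source, where the identification with the product was obtained instead from the uniqueness lemma of \cite{BK}), so your route is a legitimate and in fact more careful variant of the published one. One caveat: at the endpoint $p=1$ your proposed repair (near-optimal tent functions plus \eqref{asyla}) can only yield the non-strict inequality $\la_1(Q)\ge d^{1/2}\pi_1$, not the strict one claimed in the statement; note, however, that the paper's own proof has the same defect at $p=1$, since neither the optimizer $u$ of \eqref{pi} nor a positive $W^{1,1}_0$ eigenfunction exists there, so this is a shared blemish rather than a gap specific to your argument.
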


\begin{proof}
By Lemma \ref{cilindri} (applied $d$ times and with $L=1$) we need only to prove the two strict inequalities. With this aim we define
$$\nu_p(Q)=\inf_{\phi\in C^{\infty}_c(Q)\setminus\{0\}}\frac{\int_{Q}\sum_{i=1}^d\left|\frac{\pa\phi}{\pa x_i}\right|^pdx}{\int_{Q}|\phi|^pdx}.$$
We notice that $\nu_p(Q)=d\pi_p^p$, with a minimizer given by
\be\label{disac}
\phi(x_1,\dots,x_d)=u(x_1)\cdots u(x_d),
\ee
where $u\in W^{1,p}(0,1)$ is a non negative function, optimal for \eqref{pi}, with unitary $L^p$ norm. 
Now, the case when $p>2$ follows by strict convexity of the map $t\to t^{p/2}$: indeed, being $d\ge 2$, integrating over $Q$ the inequality
$$|\nabla\phi(x)|^p<d^{p/2-1}\sum_i\left|\frac{\pa\phi(x)}{\pa x_i}\right|^p,$$
we obtain
$$
\la_p(Q)<d^{p/2-1}\nu_p(Q)=d^{p/2}\pi_p^p.
$$

Similarly, when $p<2$, we can consider $\tilde \phi$ to be the optimal positive function for $\la_p(Q)$, with unitary $L^p$ norm. Then, being $d\ge 2$, the strict concavity of the map $t\to t^{p/2}$ gives
$$
|\nabla \tilde \phi(x)|^p> d^{p/2-1}\sum_i\left|\frac{\pa\tilde \phi(x)}{\pa x_i}\right|^{p},
$$
which, integrated over $Q$, implies that
$$
\la_p(Q)>d^{p/2-1}\nu_p(Q)=d^{p/2}\pi_p^p.
$$
This concludes the proof of the lemma.
\end{proof}

We are now in a position to prove the following existence result in the case $d=2$.

\begin{theo}\label{main}
Let $1\le q<p\le +\infty$. Suppose $q\le2\le p$, then there exists $\O^\star\in\A^2_{convex}$ such that
$$\F_{p,q}(\O^\star)=\min\{\F_{p,q}(\O)\ :\ \O\in\A^2_{convex}\}.$$
\end{theo}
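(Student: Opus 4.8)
The plan is to reduce the existence statement to the strict-gap criterion of Theorem~\ref{existence}. For $d=2$ that criterion reads $\overline{m}_2(p,q)<\overline{m}_1(p,q)$, so the entire proof amounts to verifying this single strict inequality. First I would compute the one-dimensional value: every $\O\in\A^1_{convex}$ is an interval, and since $\F_{p,q}$ is constant on $\A^1_{all}$ (by the first proposition of Section~\ref{sall}), one has
$$\overline{m}_1(p,q)=\frac{\pi_p}{\pi_q}.$$
Thus it suffices to exhibit a single planar convex set whose functional value lies strictly below $\pi_p/\pi_q$.

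The natural candidate is the unit square $Q=(0,1)^2$, estimated through Lemma~\ref{stimequadrati} with $d=2$. The key point is that the hypothesis $q\le2\le p$ makes the two one-sided estimates of that lemma cooperate. For the numerator, $p\ge2$ gives the upper bound $\la_p^{1/p}(Q)\le\sqrt2\,\pi_p$; here $p>2$ is the strict case of Lemma~\ref{stimequadrati}, the boundary value $\la_2^{1/2}(Q)=\sqrt2\,\pi_2$ follows from Lemma~\ref{cilindri} applied twice (where the two bounds coincide), and the limiting case $p=+\infty$ is handled by hand since Lemma~\ref{stimequadrati} is stated for finite exponents: one uses $\la_\infty^{1/\infty}(Q)=\rho^{-1}(Q)=2=\pi_\infty<\sqrt2\,\pi_\infty$. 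For the denominator, $q\le2$ gives the lower bound $\la_q^{1/q}(Q)\ge\sqrt2\,\pi_q$, strict when $q<2$ and an equality when $q=2$. Dividing, the $\sqrt2$ factors cancel and
$$\F_{p,q}(Q)=\frac{\la_p^{1/p}(Q)}{\la_q^{1/q}(Q)}\le\frac{\sqrt2\,\pi_p}{\sqrt2\,\pi_q}=\frac{\pi_p}{\pi_q}.$$

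Because $q<p$, the two exponents cannot both equal $2$, so at least one of the two estimates above is strict and the overall inequality is strict. Hence
$$\overline{m}_2(p,q)\le\F_{p,q}(Q)<\frac{\pi_p}{\pi_q}=\overline{m}_1(p,q),$$
and Theorem~\ref{existence} produces the desired minimizer $\O^\star\in\A^2_{convex}$.

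The proof carries no serious analytic difficulty; the only real obstacle is organizational, namely keeping track of which of the sub-cases $p>2$, $p=2$, $p=+\infty$ (and symmetrically $q<2$, $q=2$) supplies the strict inequality, and treating $p=+\infty$ separately. What deserves emphasis is that the restriction $q\le2\le p$ is precisely what lets the square beat the $\pi_p/\pi_q$ threshold: it is exactly the range in which Lemma~\ref{stimequadrati} gives an \emph{upper} bound $\sqrt2\,\pi_p$ for the $p$-quantity and a \emph{lower} bound $\sqrt2\,\pi_q$ for the $q$-quantity, so that the common factor $\sqrt2$ cancels. Outside this window the two inequalities point the wrong way, the cancellation fails, and the square no longer provides the strict dimensional gap required by Theorem~\ref{existence}.
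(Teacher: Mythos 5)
Your proposal is correct in substance and follows the same route as the paper: reduce, via Theorem \ref{existence}, to the strict inequality $\overline{m}_2(p,q)<\overline{m}_1(p,q)=\pi_p/\pi_q$, and then beat this threshold with the unit square $Q=(0,1)^2$ through Lemma \ref{stimequadrati}, using $q<p$ to guarantee that at least one of the two estimates is strict. The one place where you diverge from the paper is the treatment of the endpoint cases, and this deserves comment. The paper disposes of $q=1$ and $p=+\infty$ in one stroke, by the Buser inequality \eqref{Buser} and the Hersch--Protter inequality \eqref{HP1}, which are strict for every convex set; Lemma \ref{stimequadrati} is then invoked only for $1<q\le2\le p<+\infty$. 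You handle $p=+\infty$ by hand --- correctly, and this is in fact necessary, since the lemma is stated for finite exponents --- but you stretch Lemma \ref{stimequadrati} down to $q=1$.

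Formally that is within the lemma's stated range $1\le p<\infty$, so your proof is complete if the lemma is taken at face value; but it makes your argument lean on the weakest point of that lemma. When $(p,q)=(2,1)$ your numerator estimate is an equality, $\la_2^{1/2}(Q)=\sqrt2\,\pi_2$, so all the strictness must come from the denominator bound $\la_1(Q)=h(Q)>\sqrt2\,\pi_1=2\sqrt2$, which is precisely the $p=1$ instance of the strict inequality in Lemma \ref{stimequadrati}. The paper's proof of that strict inequality uses ``the optimal positive function for $\la_p(Q)$ with unitary $L^p$ norm'', which exists only for $p>1$: for $p=1$ the infimum defining $\la_1(Q)=h(Q)$ is not attained in $W^{1,1}_0(Q)$ (minimizers live in $BV$, as characteristic functions of Cheeger sets), so the lemma's proof does not actually cover the case your argument needs. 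The inequality $h(Q)>2\sqrt2$ is nevertheless true --- for instance it follows from \eqref{Buser} applied with $p=2$, since $\sqrt2\,\pi=\la_2^{1/2}(Q)<\frac{\pi}{2}h(Q)$ --- but the cleanest repair is simply to do what the paper does: for $q=1$ and any convex $\O$, \eqref{Buser} gives $\la_p^{1/p}(\O)<\frac{\pi_p}{2}h(\O)$, i.e.\ $\F_{p,1}(\O)<\pi_p/\pi_1$, with no need for the square at all.
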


\begin{proof}
By Theorem \ref{existence} it is sufficient to show that
$$\overline{m}_2(p,q)<\overline{m}_1(p,q)=\pi_p/\pi_q.$$
The cases when $q=1$ or $p=+\infty$ follow at once by inequalities \eqref{HP1} and \eqref{Buser}. 
The remaining cases follow by combining the upper estimate for $\la_p(Q)$ and the lower estimate for $\la_q(Q)$ given by Lemma \ref{stimequadrati}; notice that since $p\neq q$, at least one of these two inequalities is strict.
\end{proof}
 
\begin{rem}\label{massimi}
We notice that, by Proposition \ref{Ftouhi}, one readily concludes that if there exists a maximizing sequence $(\O_n)\subset\A^d_{convex}$ such that $|\O_n|=1$ and satisfying $\diam(\O_n)\to+\infty$, then
$$\overline{M}_{d}(p,q)=\overline{M}_{d-1}(p,q).$$
In particular when $d=2$, this argument would prove Conjecture \ref{conj}. On the other hand, if any maximizing sequence $(\O_n)\subset\A^d_{convex}$ with $|\O_n|=1$ is contained (up to translation) in a compact set, arguing as in Theorem \ref{existence} it is easy to conclude that a convex maximizer exists.
\end{rem}
\section{Further remarks and open problems}\label{sfurther}

Several interesting problems and questions about the shape functionals $\F_{p,q}$ are still open; in this section we list some of them.

\bigskip{\bf Problem 1. }In Theorem \ref{theo.supall} we have shown that $M_d(p,q)<+\infty$ when $q>d$; it would be interesting to give a characterization of the quantity $M_d(p,q)$ in these cases. In addition, even if we believe that the value $M_d(p,q)$ is not a maximum, that is it not reached on a domain $\O$, it would be interesting to describe the behavior of maximizing sequences $(\O_n$). It is reasonable to expect that $\O_n$ is made by a domain $\O$ where $n$ points are removed; the locations of these points in $\O$ is an interesting issue: is it true that in the two-dimensional case they are the centers of an hexagonal tiling?

\bigskip{\bf Problem 2. }Proving or disproving the existence of a domain $\O$ minimizing the shape functional $\F_{p,q}$ in the class $\A^d_{all}$ is another very interesting issue. The presence of small holes in a domain $\O$ does not seem to decrease the value of $\F_{p,q}(\O)$, which could be a point in favor of the existence of an optimal domain $\O_{p,q}$.

\bigskip{\bf Problem 3. }In the more restricted class $\A^d_{convex}$ we know that $\overline{M}_d(p,q)$ is always finite. It would be interesting to prove (or disprove) Conjecture \ref{conj} (formulated by Parini in \cite{Pa15}), that is:
$$\overline{M}_d(p,q)=\pi_p/\pi_q\hbox{ and no maximizer exists.}$$
In other words, maximizing sequences are made by thin slabs
$$\O_\eps=A\times(0,\eps)\quad\hbox{with $\eps\to0$ and $A$ a smooth $d-1$ dimensional domain.}$$
At present the problem is open even in the case $d=2$, see also Remark \ref{massimi}.

\bigskip{\bf Problem 4. }Concerning the minimum $\overline{m}_d(p,q)$ of $\F_{p,q}$ in the class $\A^d_{convex}$, establishing if it is attained is an interesting issue. Theorem \ref{main} gives an affirmative answer in the case $d=2$ and $q\le2\le p$; in particular, this happens for $d=2$ and $q=1$, $p=2$, which is the original Cheeger case and, according to some indications by E. Parini \cite{Pa15}, the optimal domain could be in this case a square. This is not yet known.\\
We expect the existence of an optimal domain for every dimension $d$ and every $p,q$ and, as stated in Theorem \ref{existence}, this would follow once the strict monotonicity of $\overline{m}_d(p,q)$ with respect to the dimension $d$ is proved. At present however, a general proof of this strict monotonicity is missing.

\appendix

\section{}
\label{sapp}

We devote this appendix to briefly describe the classical strategy of Cioranescu-Murat (see \cite{ciomur}) which can be used to prove Theorem \ref{theo.supall}, see Remark \ref{cioranescuproof}. These results are well known, but in the case $p\ne2$ it is not easy to find precise references, hence we add them for the sake of completeness and for reader's convenience. We limit ourselves to prove only what we need in the paper, pointing out that the following results can be obtained in a more general framework of $\gamma$-convergence (see for instance the monographs \cite{bubu05}, \cite{He} and references therein).

Let $1\le p\le d$, $\O$ be a bounded connected smooth open set, and $\eps>0$. We consider in $\R^d$ ($d\ge2$) the lattice of parallel cubes $P^i_\eps$ of size $2\eps$ and we denote by $x^\eps_i$ the centers. In each cube we consider a tiny ball $B_{r_\eps}(x^\eps_i)$ of radius $r_\eps$, where $r_\eps<\eps$. Finally we set $$
C_{\eps}=\left\{x_i^\eps\ :\ P^i_\eps\Subset\O\right\}.
$$
and 
$$\O_\eps=\O\setminus\bigcup_{\ x\in C_{\eps}} \overline{B}_{r_\eps}(x).$$
Our goal is to determine the behavior of $\la_p(\O_\eps)$ as $\eps\to 0$. This depends on the size of $r_\eps$, and more precisely on the following ratio:
\be\label{cioranescuratio}
a_\eps=\begin{cases}
\eps^{-d}r_\eps^{d-p}&\hbox{if }p<d,\\
\eps^{-d}(-\ln(r_\eps))^{{1-d}}&\hbox{if }p=d.
\end{cases}
\ee

\begin{prop}[super-critical case]
If $a_\eps\to +\infty$ as $\eps\to 0$, then $\la_p(\O_\eps)\to+\infty$.
\end{prop}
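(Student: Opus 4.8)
The plan is to reduce the statement to a Poincaré inequality on $\O_\eps$ whose constant degenerates as $\eps\to0$. Since
$$\la_p(\O_\eps)=\inf\Big\{\tfrac{\int_{\O_\eps}|\nabla u|^p\,dx}{\int_{\O_\eps}|u|^p\,dx}\ :\ u\in C^\infty_c(\O_\eps)\Big\},$$
and every $u\in C^\infty_c(\O_\eps)$, once extended by zero inside the removed balls, belongs to $W^{1,p}_0(\O)$ and vanishes on each $\overline B_{r_\eps}(x^\eps_i)$ with $x^\eps_i\in C_\eps$, it is enough to exhibit a sequence $\kappa_\eps\to0$ and a constant $C=C(d,p,\O)$ such that
$$\int_\O|u|^p\,dx\le C\,\kappa_\eps\int_\O|\nabla u|^p\,dx$$
for all such $u$; this immediately yields $\la_p(\O_\eps)\ge (C\kappa_\eps)^{-1}\to+\infty$. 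The natural choice is $\kappa_\eps=\eps^p+a_\eps^{-1}$, which tends to $0$ precisely because $\eps\to0$ and $a_\eps\to+\infty$.

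First I would localize the inequality on the lattice. Writing $\O$ as the (essentially disjoint) union of the cubes $P^i_\eps$ with $P^i_\eps\Subset\O$ --- each carrying a hole $B_{r_\eps}(x^\eps_i)$ --- together with a boundary layer $\{x\in\O:\ \dist(x,\pa\O)\le C\eps\}$, the estimate splits into two contributions. On the boundary layer, which has width $O(\eps)$ and across which $u$ vanishes (in the trace sense, since $u\in W^{1,p}_0(\O)$ and $\O$ is smooth), a one–dimensional Friedrichs inequality along the inward normal gives $\int|u|^p\le C\eps^p\int|\nabla u|^p$, a contribution consistent with $\kappa_\eps$. Everything then reduces to the model estimate on a single perforated cube.

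The core of the argument is the local inequality: on a cube $Q$ of side $2\eps$ with a central hole $B_{r_\eps}$ on which $u$ vanishes,
$$\int_Q|u|^p\,dx\le C\Big(\eps^p+\tfrac1{a_\eps}\Big)\int_Q|\nabla u|^p\,dx.$$
I would prove it by splitting $u=\bar u+(u-\bar u)$ with $\bar u=\avint_Q u\,dx$. The oscillation is handled by the Poincaré–Wirtinger inequality on a cube of side $2\eps$, namely $\int_Q|u-\bar u|^p\le C\eps^p\int_Q|\nabla u|^p$. For the mean, the hypothesis $u\equiv0$ on $B_{r_\eps}$ combined with the notion of $p$-capacity yields the capacitary bound
$$|Q|\,|\bar u|^p\le \frac{C}{\ccap_p(B_{r_\eps};Q)}\int_Q|\nabla u|^p\,dx.$$
Inserting the classical capacity asymptotics $\ccap_p(B_{r_\eps};Q)\simeq c\,r_\eps^{d-p}$ when $p<d$, and $\ccap_p(B_{r_\eps};Q)\simeq c\,(-\ln r_\eps)^{1-d}$ when $p=d$, together with $|Q|=(2\eps)^d$, the resulting coefficient $|Q|/\ccap_p(B_{r_\eps};Q)$ is bounded by $C\,a_\eps^{-1}$ by the very definition \eqref{cioranescuratio} of $a_\eps$ (with equality up to constants when $p<d$). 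Summing over the cubes and adding the boundary layer produces the global inequality with $\kappa_\eps=\eps^p+a_\eps^{-1}$, which completes the proof.

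The step I expect to be the main obstacle is the capacitary bound on the mean, i.e.\ converting the qualitative information ``$u$ vanishes on $B_{r_\eps}$'' into the quantitative estimate $|\bar u|^p\le C\,\ccap_p(B_{r_\eps};Q)^{-1}\,|Q|^{-1}\int_Q|\nabla u|^p$. One clean way is to integrate the spherical averages $\phi(s)=\avint_{\pa B_s}u\,d\HH^{d-1}$ from $s=r_\eps$, where $\phi=0$, outwards and apply Hölder in the radial variable, which reproduces the $p$-capacity of the annulus $B_\eps\setminus B_{r_\eps}$; one must also check, when $p=d$, that the resulting radial integral, governed by $\ln(\eps/r_\eps)$, is still controlled by $a_\eps^{-1}=\eps^d(-\ln r_\eps)^{d-1}$, which holds since $\ln(\eps/r_\eps)\le-\ln r_\eps$. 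The remaining ingredients --- the Poincaré–Wirtinger inequality, the boundary layer estimate, and the summation over cubes --- are standard.
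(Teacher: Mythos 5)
Your argument is correct, but it packages the proof differently from the paper. The paper covers $\O$ by balls $B_{c\eps}(x)$, $x\in C_\eps$, splits this cover into $N$ sub-families of disjoint balls, and bounds the Rayleigh quotient on each perforated ball from below by the mixed eigenvalue $\mu_{c\eps,r_\eps}$ (Dirichlet on $\pa B_{r_\eps}$, Neumann on $\pa B_{c\eps}$); summing over the cover gives $\la_p(\O_\eps)\ge N^{-1}\mu_{c\eps,r_\eps}$, and a radial reduction plus H\"older shows that $\mu_{c\eps,r_\eps}$ blows up exactly at the rate $a_\eps$. You instead use the disjoint cube partition plus an $O(\eps)$ boundary layer, and on each cube a mean/oscillation splitting: Poincar\'e--Wirtinger controls the oscillation by $C\eps^p\int_Q|\nabla u|^p\,dx$, while the vanishing on the hole controls the mean through the capacity of the annulus $B_\eps\setminus \overline B_{r_\eps}$ --- and your spherical-average/H\"older computation of that capacity is, in substance, the very same radial estimate the paper performs for $\mu_{c\eps,r_\eps}$. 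So the quantitative core (annulus capacity versus $\eps^{-d}$, i.e.\ the ratio $a_\eps$) is identical; what differs is the localization and the functional-analytic lemma. Your route is more modular (both local ingredients are textbook facts, essentially Maz'ya's capacitary Poincar\'e inequality) and yields the explicit additive rate $\la_p(\O_\eps)\ge c\,(\eps^p+a_\eps^{-1})^{-1}$, but it requires the extra boundary-layer Friedrichs inequality, legitimate here because $\O$ is assumed smooth; note that the paper leans on the same regularity implicitly, to guarantee that balls of radius $c\eps$ centered at interior cube centers cover all of $\O$. Conversely, the paper's Neumann-eigenvalue lemma treats mean and oscillation in one stroke and needs no boundary layer, at the price of the bounded-overlap covering combinatorics. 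The two delicate points you flagged are indeed the right ones: for $p=d$ the relevant capacity is governed by $\ln(\eps/r_\eps)$ rather than $-\ln r_\eps$, and your observation $\ln(\eps/r_\eps)\le-\ln r_\eps$ (with the negative exponent $1-d$) closes that gap; moreover the capacitary bound on the mean should be understood with the annulus capacity $\ccap_p(\overline B_{r_\eps};B_\eps)$ --- which is what your argument actually produces and which suffices, since it dominates $c\,r_\eps^{d-p}$ for $p<d$ and $c\,(-\ln r_\eps)^{1-d}$ for $p=d$.
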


\begin{proof}
Given $R>r>0$, we denote by $\mu_{R,r}$ the least eigenvalue of $B_R\setminus\overline{B}_r$ with Dirichlet boundary condition on $\pa B_r$ and Neumann boundary condition on $\pa B_R$, that is:
\be\label{neumann}
\mu_{R,r}=\inf\left\{\frac{\int_{B_R\setminus B_r}|\nabla v|^pdx}{\int_{B_R\setminus B_r}|v|^pdx}\ :\ v\in W^{1,p}(B_R\setminus \overline{B_r}),\ v=0 \hbox{ on }\pa B_r \right\},
\ee
where the condition $v=0$ on $\pa B_r$ is intended in the usual trace sense.
Notice that by exploiting the convexity property of the functional $u\mapsto\int|\nabla u^{1/p}|^p$ (as done in  \cite{BK}, \cite{Be1} and \cite{DS}), we can infer that there exists a unique positive minimizer $v$ for \eqref{neumann} with unitary $L^p$ norm. In particular, being the domain $B_R\setminus B_r$ radial, $v$ is a radially symmetric function in $W^{1,p}(B_R\setminus \overline{B_r})$.

We claim that there exist constants $c\ge1$ and $\xi>0$ (which do not depend on $\eps$) such that
\be\label{stimaconneu}
\la_p(\O_\eps)\ge \xi\mu_{c\eps, r_\eps}.
\ee
Assume \eqref{stimaconneu} to be true, we obtain the thesis by proving that $\mu_{c\eps,r_\eps}\to +\infty$ as $\eps\to0$. 

Indeed, taking for simplicity $c=1$ and using coarea formula, we have that
$$\mu_{\eps,r_\eps}=\inf\left\{\int_{r_\eps}^\eps |u'(t)|^pt^{d-1}dt\right\},$$ 
where the infimum is computed among non negative functions $u \in C^{\infty}(r_\eps,\eps)$ vanishing on $r_\eps$ and satisfying
\be\label{unitary}
\int_{r_\eps}^\eps|u(t)|^{p}t^{d-1}dt=1.
\ee 
If $u$ is admissible, by H\"older inequality,
\be \label{referata}
\int_{r_\eps}^\eps |u'(t)|^pt^{d-1}dt\ge\left(\int_{r_\eps}^\eps|u'(t)|dt\right)^p\left(\int_{r_\eps}^\eps\frac{1}{t^{(d-1)/(p-1)}}dt\right)^{1-p};
\ee
moreover there exists $t_0\in (r_\eps,\eps)$ such that
$$|u(t_0)|=\left(\int_{r_\eps}^\eps	t^{d-1}dt\right)^{-1/p},$$
since otherwise
$$
0\le u(t)<\left(\int_{r_\eps}^\eps	t^{d-1}dt\right)^{-1/p} \hbox{for every } t\in (r_\eps,\eps),
$$
would imply
$\int_{r_\eps}^\eps |u(t)|^pt^{d-1}dt<1$, in contradiction with \eqref{unitary}.
Hence, using the fact that $u(r_{\eps})=0$, we have
$$\int_{r_\eps}^\eps |u'(t)|dt\ge \left |\int_{r_\eps}^{t_0}u'(t)dt\right|\ge\left(\int_{r_\eps}^\eps t^{d-1}dt\right)^{-1/p}.$$
Finally, the latter inequality combined with \eqref{referata} implies
$$
\int_{r_\eps}^\eps |u'(t)|^pt^{d-1}dt\ge \left(\int_{r_\eps}^\eps t^{d-1}dt\right)^{-1}\left(\int_{r_\eps}^\eps \frac{1}{t^{(d-1)/(p-1)}}dt\right)^{1-p}. 
$$
In the case $p<d$ (the case $p=d$ being similar), computing the right-hand side in the previous inequality we obtain
$$
\mu_{\eps, r_\eps}\ge d\left(\frac{1}{\eps^d-r_{\eps}^d}\right) \left(\frac{d-p}{p-1}\right)^{p-1}\left(\frac{1}{r_{\eps}^{(p-d)/(p-1)}-\eps^{(p-d)/(p-1)}}\right)^{p-1}.
$$
Taking \eqref{cioranescuratio} into account, it is easy to verify that the right hand side of the previous inequality tends to $+\infty$ as $\eps\to0$.

To conclude let us prove \eqref{stimaconneu}. We notice that there exists $c>1$, which does not depend on $\eps$, such that for every $\eps$ small enough the family of balls 
$$
\mathcal{G}^\eps=\left\{B_{c\eps}(x)\ :\ x\in C_\eps\right\},
$$
covers $\O$. Moreover there exists $N\in\N$, which again does not depend on $\eps$, such that we can split $\mathcal{G}^\eps$ into $N$ sub-families $\mathcal{G}_1^\eps,\dots \mathcal{G}_N^\eps$ made up of disjoint balls. This latter assertion can be easily proved once noticed that any ball in $\mathcal{G}^\eps$ can intersect only a bounded number of different balls in $\mathcal{G}^\eps$, and such a bound does not depend on $\eps$. Indeed suppose that $B_{c\eps}(\bar x)\in\mathcal{G}^\eps$ intersects $B_{c\eps}(x_1),\dots,B_{c\eps}(x_m)\in\mathcal{G}^\eps$, then we have also
$$
 \bigcup_{i=1}^m B_\eps(x_i) \Subset B_{3c\eps}(\bar x),
$$
in particular, taking the measures of both sets, we get $m\le(3c)^d$. Therefore, it is sufficient to take $N=[(3c)^d]+1.$

Now, let $u\in C_c^{\infty}(\O_\eps)$ and extend $u$ by zero outside $\O_\eps$. We have
\[\begin{split}
&N\int_{\O}|\nabla u|^pdx\ge \sum_{i=1}^N\sum_{B\in\mathcal{G}_i^\eps}\int_{B}|\nabla u|^pdx\ge \mu_{c\eps,r_{\eps}}\sum_{i=1}^N\sum_{B\in\mathcal{G}_i^\eps}\int_{B}|u|^pdx\geq \mu_{c\eps,r_\eps}\int_{\O}|u|^pdx.
\end{split}\]
Thus, by the arbitrariness of $u$ we obtain \eqref{stimaconneu} with $\xi=N^{-1}$.
\end{proof}

\begin{prop}[sub-critical case]
If $a_\eps\to 0$ as $\eps\to 0$, then $\la_p(\O_\eps)\to\la_p(\O)$.
\end{prop}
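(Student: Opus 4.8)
The plan is to establish the two matching bounds $\liminf_{\eps\to0}\la_p(\O_\eps)\ge\la_p(\O)$ and $\limsup_{\eps\to0}\la_p(\O_\eps)\le\la_p(\O)$. The first is immediate: since $\O_\eps\subset\O$, the monotonicity \eqref{decla} gives $\la_p(\O_\eps)\ge\la_p(\O)$ for every $\eps$. All the real work goes into the upper bound, which I would obtain by a Cioranescu--Murat corrector argument. Starting from an almost optimal $u\in C^1_c(\O)$ for $\la_p(\O)$, I build an admissible competitor $\phi_\eps=u\,w_\eps\in W^{1,p}_0(\O_\eps)$ for $\la_p(\O_\eps)$, where $w_\eps$ is a cut-off that vanishes on the removed balls (so that $\phi_\eps$ is indeed admissible, as $u$ has compact support in $\O$) and equals $1$ away from them, with asymptotically negligible $p$-Dirichlet energy.

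Concretely, I would fix an intermediate radius $R_\eps$ with $r_\eps\ll R_\eps\ll\eps$, for instance $R_\eps=\sqrt{r_\eps\eps}$. Note that $a_\eps\to0$ forces $r_\eps\ll\eps$ in both regimes $p<d$ and $p=d$, so such a scale exists and $R_\eps<\eps$ fits strictly inside each cube $P^i_\eps$. On each annulus $B_{R_\eps}(x_i^\eps)\setminus \overline{B}_{r_\eps}(x_i^\eps)$ with $x_i^\eps\in C_\eps$, I let $w_\eps$ be the radial $p$-capacitary potential, equal to $0$ on $\pa B_{r_\eps}$ and to $1$ on $\pa B_{R_\eps}$, and extend it by $0$ inside the holes and by $1$ elsewhere in $\O$. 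Since the balls $B_{R_\eps}(x_i^\eps)$ are pairwise disjoint and $w_\eps\equiv1$ outside their union, an explicit radial computation (the same coarea/ODE reduction already used for $\mu_{R,r}$) gives $\int_\O|\nabla w_\eps|^p\,dx=\sum_{x_i^\eps\in C_\eps}\ccap_p(\overline{B}_{r_\eps};B_{R_\eps})$. Using that the annular capacity is of order $r_\eps^{d-p}$ when $p<d$ and of order $(\ln(R_\eps/r_\eps))^{1-d}$ when $p=d$, together with the fact that the number of holes is at most $C\eps^{-d}$, the choice $r_\eps\ll R_\eps$ keeps the total energy comparable to $a_\eps$, whence $\int_\O|\nabla w_\eps|^p\,dx\to0$.

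With this corrector in hand I would estimate the Rayleigh quotient of $\phi_\eps$. For the numerator, Minkowski's inequality together with $0\le w_\eps\le1$ and $\|u\|_\infty<\infty$ gives $\|\nabla\phi_\eps\|_{L^p}\le\|\nabla u\|_{L^p}+\|u\|_\infty\|\nabla w_\eps\|_{L^p}$, so that $\limsup_\eps\int_{\O_\eps}|\nabla\phi_\eps|^p\le\int_\O|\nabla u|^p$. For the denominator, the set $\{w_\eps\ne1\}$ is contained in the union of the balls $B_{R_\eps}(x_i^\eps)$, of total measure at most $C\eps^{-d}R_\eps^d=C(r_\eps/\eps)^{d/2}\to0$; since $u$ is bounded, this forces $\int_{\O_\eps}|\phi_\eps|^p=\int_\O|u|^p w_\eps^p\to\int_\O|u|^p$. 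Passing to the limit and then taking the infimum over $u\in C^1_c(\O)$ in \eqref{def.la} yields $\limsup_\eps\la_p(\O_\eps)\le\la_p(\O)$, which combined with the lower bound proves $\la_p(\O_\eps)\to\la_p(\O)$.

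I expect the main obstacle to be the quantitative calibration of the corrector. One has to choose the intermediate radius $R_\eps$ so that \emph{simultaneously} the bad set $\{w_\eps\ne1\}$ has vanishing measure (which forces $R_\eps\ll\eps$) and the total capacity stays of order $a_\eps$ (which forces $r_\eps\ll R_\eps$), and one must check that the annular $p$-capacity has the claimed asymptotics in both cases $p<d$ and $p=d$. The logarithmic regime $p=d$ is the delicate one, since there one must verify that $\ln(R_\eps/r_\eps)$ is comparable to $-\ln r_\eps$, which is exactly where the definition \eqref{cioranescuratio} of $a_\eps$ and the subcritical hypothesis $-\ln r_\eps\gg\eps^{-d/(d-1)}$ enter.
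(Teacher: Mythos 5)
Your proof is correct, and it follows the same Cioranescu--Murat corrector skeleton as the paper: monotonicity \eqref{decla} for the lower bound, then a competitor of the form $u\cdot(\text{cut-off})$ whose extra $p$-energy is (number of holes) $\times$ (capacity of one hole) $\lesssim a_\eps\to0$. Where you genuinely diverge is in how the Rayleigh quotient is shown to converge. The paper takes a near-optimal competitor $v_\eps\in C^\infty_c(B_\eps)$ for $\ccap_p(\overline{B}_{r_\eps};B_\eps)$, supported in the \emph{whole} cell ball $B_\eps$; consequently the set where its corrector $V_\eps=1-v_\eps$ differs from $1$ can occupy a fixed fraction of $\O$, and the convergence $V_\eps u\to u$ in $W^{1,p}_0(\O)$ must be obtained by compactness: $\|\nabla V_\eps\|_{L^p}\to0$ gives weak $W^{1,p}$ convergence to a constant, the constant is identified as $1$ via the boundary datum $V_\eps=1$ on $\pa\O$, and Rellich upgrades this to strong $L^p$ convergence on $\spt u$. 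You avoid all of that with the two-scale trick: confining the transition of $w_\eps$ to the annulus $B_{R_\eps}\setminus\overline{B}_{r_\eps}$ with $R_\eps=\sqrt{r_\eps\eps}$ makes $|\{w_\eps\ne1\}|\le C(r_\eps/\eps)^{d/2}\to0$, so the denominator converges by a trivial measure estimate; the price is that you must check that the annular capacity still has the critical order, in particular that $\ln(R_\eps/r_\eps)\sim-\ln r_\eps$ when $p=d$, which you correctly reduce to the subcritical hypothesis $-\ln r_\eps\gg\eps^{-d/(d-1)}$. So your version is more elementary and self-contained (no weak compactness, no boundary-value identification), while the paper's is indifferent to the choice of intermediate scale and delegates the capacity asymptotics to a single reference (Maz'ya, Section 2.2.4). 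One small point to tighten: your $\phi_\eps=u\,w_\eps$ vanishes on the closed removed balls but not on a neighborhood of them, so $\phi_\eps\in W^{1,p}_0(\O_\eps)$ is not immediate from the definition; you need the standard fact that a compactly supported Lipschitz function vanishing outside an open set belongs to $W^{1,p}_0$ of that set (approximate by $(\phi_\eps-\delta)^+$ and mollify), or else, as the paper does, take a cut-off equal to $0$ on a neighborhood of each $\overline{B}_{r_\eps}(x_i^\eps)$ so that the test function has compact support in $\O_\eps$.
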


\begin{proof}
First we notice that by monotonicity we have
$$\la_p(\O_\eps)\ge \la_p(\O).$$
Hence it is enough to prove that
\be\label{moscoup}
\limsup_{\eps\to 0}\la_p(\O_\eps)\le \la_p(\O).
\ee
Let $v_\eps$ be a competitor for $\ccap_p(\overline{B}_{r_\eps};B_\eps)$ chosen in such a way that:
$$
v_\eps\in C^\infty_c(B_\eps),\quad 0\le v_\eps\le1,\quad v_\eps=1\hbox{ on }B_{r_\eps},\quad\|\nabla v_\eps\|_p\le\ccap_p(\overline{B}_{r_\eps};B_\eps)+o(\eps^d).
$$ 
 We define $V_\eps$ in $\O$ to be
$$V_\eps(x)=
\begin{cases}
1-v_\eps(x-x_i),& \hbox{in }B^i_\eps(x_i) \hbox{ if } x_i\in C_{\eps}\\
1, &\hbox{in } \Omega\setminus \bigcup_{x\in C_{\eps}} B^i_\eps(x),
\end{cases}$$
and we denote by $n(\eps)\in\N$ be the number of cubes $P^i_\eps$ such that $P_i^\eps\Subset\O$.
We have
$$
\|\nabla V_\eps\|_{L^p(\O)}\le n(\eps)\|\nabla v_\eps\|_{L^p(B_\eps)}\le (2\eps)^{-d}|\O|\|\nabla v_\eps\|_{L^p(B_\eps)}\approx (2\eps)^{-d}|\O|\ccap_p(\overline{B}_{r_\eps};B_\eps) .
$$
Since $r_\eps\to0$, the latter implies 
$$
\|\nabla V_\eps\|_{L^p(\O)}\to0
$$
(see Section 2.2.4 of \cite{Maz} for the precise value of $\ccap_p(\overline{B}_{r_\eps};B_\eps)$).
This means that $V_\eps$ weakly converge in $W^{1,p}(\O)$ to some constant $c\in\R$. Moreover, since $V_\eps=1$ on $\partial\O$, we can infer that $c=1$.

Now, let $u\in C^{\infty}_c(\O)$, and consider $u_\eps=V_\eps u$. We have $u_\eps\in W^{1,p}_0(\O_\eps)$ and $u_\eps\to u$ strongly in $W_0^{1,p}(\O)$. In particular
$$
\frac{\int_\O|\nabla u|^pdx}{\int_\O|u|^pdx}=\lim_{\eps\to 0}\frac{\int_{\O_\eps}|\nabla u_\eps|^pdx}{\int_{\O_\eps}|u_\eps|^pdx}\ge \limsup_{\eps\to 0}\la_p(\O_\eps).
$$
Since $u$ is arbitrary we get \eqref{moscoup}.
\end{proof}

\bigskip

\noindent{\bf Acknowledgments.} We wish to thank Lorenzo Brasco for the useful discussions on the subject. The work of GB is part of the project 2017TEXA3H {\it``Gradient flows, Optimal Transport and Metric Measure Structures''} funded by the Italian Ministry of Research and University. The authors are member of the Gruppo Nazionale per l'Analisi Matematica, la Probabilit\`a e le loro Applicazioni (GNAMPA) of the Istituto Nazionale di Alta Matematica (INdAM).

\bigskip

\bigskip
{\small\noindent
Luca Briani:
Dipartimento di Matematica,
Universit\`a di Pisa\\
Largo B. Pontecorvo 5,
56127 Pisa - ITALY\\
{\tt luca.briani@phd.unipi.it}

\bigskip\noindent
Giuseppe Buttazzo:
Dipartimento di Matematica,
Universit\`a di Pisa\\
Largo B. Pontecorvo 5,
56127 Pisa - ITALY\\
{\tt giuseppe.buttazzo@dm.unipi.it}\\
{\tt http://www.dm.unipi.it/pages/buttazzo/}

\bigskip\noindent
Francesca Prinari:
Dipartimento di Scienze Agrarie, Alimentari e Agro-ambientali,\\
Universit\`a di Pisa\\
Via del Borghetto 80,
 56124 Pisa - ITALY\\
{\tt francesca.prinari@unipi.it}

\end{document}